\newtheorem*{notation*}{Notation}
\newtheorem*{LLN*}{Asymptotic Law of Large Numbers}
\newtheorem{heuristic}[theorem]{Heuristic}
\title{Counting Functions for Random Objects in a Category}
\author{Brandon Alberts}
\date{}
\begin{document}
\maketitle

\abstract{
In arithmetic statistics and analytic number theory, the asymptotic growth rate of counting functions giving the number of objects with order below $X$ is studied as $X\to \infty$. We define general counting functions which count epimorphisms out of an object on a category under some ordering. Given a probability measure $\mu$ on the isomorphism classes of the category with sufficient respect for a product structure, we prove a version of the Law of Large Numbers to give the asymptotic growth rate as $X$ tends towards $\infty$ of such functions with probability $1$ in terms of the finite moments of $\mu$ and the ordering. Such counting functions are motivated by work in arithmetic statistics, including number field counting as in Malle's conjecture and point counting as in the Batyrev-Manin conjecture. Recent work of Sawin--Wood gives sufficient conditions to construct such a measure $\mu$ from a well-behaved sequence of finite moments in very broad contexts, and we prove our results in this broad context with the added assumption that a product structure in the category is respected. These results allow us to formalize vast heuristic predictions about counting functions in general settings.
}


\newpage

\section{Introduction}

Distributions of arithmetic objects are commonly studied via counting functions. A classic example is the prime number theorem, which determines the asymptotic growth rate of
\[
\pi(X) = \#\{p\text{ prime }\mid p\le X\} \sim \frac{X}{\log X}
\]
as $X$ tends to infinity. Counting functions on subsets of prime numbers is one way that we work to understand the distribution of the prime numbers. Cram\'er made rigorous a heuristic argument of Gauss for predicting the distribution of prime numbers in \cite{cramer1994}, whereby a sequence of independent variables $X_n$ indexed by natural numbers and valued in $\{0,1\}$ are considered, for which $X_n=1$ with probability $\frac{1}{\log(n)}$ as suggested by the prime number theorem. By asking the same distribution questions of the sequence $X_n$ instead of the sequence of prime numbers, Cram\'er is able to prove that this sequence satisfies a number of properties conjectured to be true for the primes with probability $1$. See \cite{granville1995} for a summary of Cram\'er's random model and some improvements.

In arithmetic statistics, other objects are frequently studied in this way. Given an ordering of the objects satisfying a Northcott property, one can ask for the asymptotic growth rate of the counting function
\begin{align}\label{eq:counting}
\#\{\text{object} \mid \text{order(object)}\le X\}
\end{align}
as $X$ tends to $\infty$. The standard examples are points on schemes (or stacks) bounded by height, whose asymptotic growth rates are predicted by the Batyrev-Manin conjecture \cite{franke-manin-tschinkel1989,batyrev-manin1990}, and counting $G$-extensions of a global field $K$ ordered by discriminant, whose asymptotic growth rates are predicted by Malle's conjecture \cite{malle2002,malle2004}.

Malle's conjecture in particular is closely related to the study of class group statistics, where one asks for the distribution of the $p$-parts of the class group of an extension $K/\Q$ as $K$ varies over some family of extensions ordered by discriminant. This distribution is very often predicted to agree with a true probability distribution on random groups. For a non-exhaustive list of examples for statistics of very general unramified objects over a family of global fields see \cite{cohen-lenstra1984,friedman-washington1989,boston-bush-hajir2017,liu-wood-zureick-brown2019}. Despite this close relation, there has been no attempt to make predictions for Malle's counting function using similar random structures. Like with Cram\'er's model, it is reasonable to consider that there exists a random group with some discriminant structure which models the absolute Galois group of the global field $K$ for which Malle's conjecture holds with probability $1$. Potentially the same is true for other counting functions in arithmetic statistics.

Another eye-catching feature of Malle's conjecture is the Galois correspondence. $G$-extensions $L/K$ are in a $1$-to-$|\Aut(G)|$ correspondence with surjective homomorphisms $\Gal(\overline{K}/K)\to G$. The recent work of Sawin--Wood \cite{sawin-wood2022} solved the moment problem for random objects in a category, where they proved in great generality that given a sequence $M_G$ on (isomorphism classes of) the category of finite objects $C$ that ``do not grow too fast'', there exists a measure $\mu$ on (isomorphism classes of) the category of pro-objects $\mathcal{P}$ whose finite moments are given by $M_G$. In the setting of objects in a category, this is taken to mean
\[
\int_{\mathcal{P}}\#{\rm Epi}(\mathscr{G},G)\ d\mu(\mathscr{G}) = M_G
\]
for each finite object $G$. Here we see an immediate similarity with Malle's conjecture - Malle's conjecture can be interpreted as an asymptotic count for the number of epimorphisms from the absolute Galois group of $K$ to a fixed finite group. This suggests that the setting Sawin--Wood work in, which they state was built with statistics of unramified objects in mind, is also an excellent setting to model Malle's conjecture. By extension, random models for other counting functions may also exist in the setting considered by Sawin--Wood as long as they can be interrpretted as counting epimorphisms.

The goal of this paper is to consider counting functions in the broad setting described by Sawin--Wood. The remainder of this introduction is separated into two subsections - the first defining a counting function with respect to an ordering for counting epimorphisms in a category (see Definition \ref{def:countingfunction}), and the second for stating Law of Large Numbers results to determine the asymptotic growth rate of these counting functions with probability $1$ (see Theorem \ref{thm:LLNintro}). We will prove the Law of Large Numbers for random objects in a category in as great of generality as possible, with the intention of allowing these results to easily translate to a variety of other counting functions in other settings. In a forthcoming paper \cite{alberts2023}, the author will apply these results to the case of Malle's conjecture to both recreate and improve on existing predictions for the asymptotic growth rate. This will be done by constructing a category of ``groups with local data'', applying the results of \cite{sawin-wood2022} to construct a measure on this category, and applying the results of this paper to prove that 100\% of random groups with local data satisfy Malle's conjecture.

We adopt the notation and terminology of \cite{sawin-wood2022} throughout. Let $C$ be a diamond category in the sense of \cite[Definition 1.3]{sawin-wood2022} with countably many isomorphism classes, and suppose $\mu$ is a probability measure (so the whole space has measure $1$) on the isomorphism classes of corresponding pro-objects, $\mathcal{P}$, under the level topology with finite moments $M_G$ for each $G\in C/\cong$. Sawin--Wood give sufficient conditions for a sequence $M_G$ to correspond to such a measure in \cite[Theorem 1.7 and 1.8]{sawin-wood2022}. Our results do not require $\mu$ to be constructed in this way, but the author expects that most interesting examples will come from the existence results proven by Sawin--Wood.

\subsection{Counting Functions}
To translate the setting given by \cite{sawin-wood2022} to that of counting functions more closely resembling (\ref{eq:counting}), we very generally define an \textbf{ordering} to be a sequence of functions $f_n:C/\cong \to \C$ indexed by positive integers $n$. We are motivated by classical orderings such as the discriminant and height functions, which correspond to $f_n$ being the characteristic function of an increasing chain of finite sets $A_n\subseteq C/\cong$, i.e. a chain of subsets for which $A_{n}\subseteq A_{n+1}$. In the general format described in (\ref{eq:counting}), we would choose the ordering
\[
f_n({\rm object}) = \begin{cases}
1 & {\rm order}({\rm object}) \le n\\
0 & \text{else}.
\end{cases}
\]
The property that such functions have finite support is called the Northcott Property, and is known to hold for discriminant and height orderings among many others.
\begin{definition}\label{def:countingfunction}
The \textbf{counting function} on a pro-object $\mathscr{G}\in \mathcal{P}$ ordered by $f_n:C/\cong \to \C$ is defined by
\begin{align*}
N(\mathscr{G},f_n) = \sum_{G\in C/\cong} f_n(G) \#{\rm Epi}(\mathscr{G},G)
\end{align*}
as a function of $n$, when the series is convergent.
\end{definition}
In the classical setting, a Northcott Property is used to guarantee that the counting function is well-defined by forcing the series to be finite. We remark that in the classical setting ${\rm order}({\rm object})$ is taken to be bounded above by a real number $X$, while Definition \ref{def:countingfunction} restricts to only using integer bounds, $n$. This will not make a difference for classical orderings, as the (norm of the) discriminant and height functions are integer-valued. However, it is an artifact of the methods we use that we ask the sequence $(f_n)_{n=1}^{\infty}$ to be indexed by a \emph{countable} set rather than the uncountable set of positive real numbers.

In our general setting, we may relax the Northcott requirement and still guarantee an (almost everywhere) well-defined counting function. Fix a probability measure $\mu$ on the category of pro-objects $\mathcal{P}$ with finite moments $M_G$. For convenience, we package the sequence $M_G$ as a discrete measure $M:C/\cong\to \R_{\ge 0}$ with $M(\{G\})=M_G$ (note that while we require $\mu$ to be a probability measure, $M$ need not be).
\begin{definition}\label{def:L1ordering}
Fix a probability measure $\mu$ on the category of pro-objects $\mathcal{P}$ with finite moments $M_G$. We call $f_n$ an \textbf{$L^1$-ordering} with respect to $\mu$ if
\[
\int_C |f_n|\ dM = \sum_{G\in C/\cong} |f_n(G)| M_G < \infty.
\]
In other words, the $f_n$ are $L^1$-functions for the discrete measure $M$ induced by $\mu$. We will often omit ``with respect to $\mu$'' if the probability measure is clear from context.
\end{definition}
We will prove in Lemma \ref{lem:nicecase} that $N(\mathscr{G},f_n)$ is well-defined as a function of $n$ for almost all $\mathscr{G}$ whenever $f_n$ is an $L^1$-ordering (i.e., there exists a measure $1$ set of $\mathscr{G}$ on which the counting function is well-defined as a function on the positive integers), and that the expected value is given by
\begin{align}\label{eq:exp}
\int_{\mathcal{P}} N(\mathscr{G},f_n)\ d\mu(\mathscr{G}) = \int_C f_n\ dM.
\end{align}
In the classical case, if $f_n$ has finite support in an increasing chain as $n$ tends towards $\infty$ then the counting function $N(\mathscr{G},f_n)$ is a sum of increasing length depending on $n$, where the summands $f_n(G)\#{\rm Epi}(\mathscr{G},G)$ are random variables as $\mathscr{G}$ varies according to $\mu$. This is precisely the setting of the Law of Large Numbers, which suggests a much stronger result of the form
\[
\frac{N(\mathscr{G},f_n)}{\displaystyle \int_C f_n\ dM} \longrightarrow 1
\]
as $n\to \infty$ with probability $1$ in some appropriate sense. Typically, there is some requirement of pairwise independence between the summands in order to prove such a statement (see \cite{sen-singer1993} for the classical statement, and \cite{seneta2013} for a brief history of the Law of Large Numbers, including cases that relax the requirement of pairwise independence). The work of Korchevsky--Petrov \cite{korchevsky-petrov2010} on nonegative random variables, building on work in \cite{etemadi1983a,etemadi1983b,petrov2009a,petrov2009b}, is particularly relevant to these counting functions, as $\#{\rm Epi}(\mathscr{G},G)$ is always nonegative. We will prove several different versions of the Law of Large Numbers for $N(\mathscr{G},f_n)$ of varying strengths under sufficiently nice orderings.

\subsection{Main Results}

Take $C$ to be a diamond category with countably many isomorphism classes, $\mathcal{P}$ the category of pro-objects, and $\mu$ a probability measure on $\mathcal{P}$ with finite moments $M_G$. Theorem \ref{thm:LLNintro} is our primary probabilistic result, while Theorems \ref{thm:bounding_mixed_moments_2} and \ref{thm:bounding_mixed_moments_2k} are our primary category theoretic results.

\begin{theorem}[Law of Large Numbers in a Category]\label{thm:LLNintro}
Let $f_n:C/\cong\to \R$ be a real-valued $L^1$-ordering for which $\liminf_{n\to \infty} \left\lvert\int_C f_n dM\right\rvert > 0$. Suppose there exists an integer $k$ and a non-decreasing function $\gamma:\N\to \R^+$ for which $\lim_{t\to \infty} \gamma(t) = \infty$, and
\begin{align}\label{eq:main_bound}
\int_{\mathscr{G}}\left\lvert N(\mathscr{G},f_n) - \int_C f_n\ dM\right\rvert^{k}d\mu(\mathscr{G}) = O\left(\frac{\left\lvert\int_C f_n\ dM\right\rvert^k}{\gamma(n)}\right).
\end{align}
Then each of the following hold:
\begin{enumerate}
\item[(i)](Weak Law of Large Numbers)
\[
\frac{N(\mathscr{G},f_n)}{\displaystyle \int_C f_n\ dM} \overset{p.}{\longrightarrow} 1
\]
as $n\to\infty$, where the ``p." stands for converges in probability with respect to $\mu$.
\item[(ii)](Strong Law of Large Numbers) If we additionally assume that $\sum_{n=1}^{\infty}\frac{1}{\gamma(n)} < \infty$ then
\[
\frac{N(\mathscr{G},f_n)}{\displaystyle \int_C f_n\ dM} \overset{a.s.}{\longrightarrow} 1
\]
as $n\to\infty$, where the ``a.s." stands for converges almost surely with respect to $\mu$.
\item[(iii)](Strong Law of Large Numbers) If we additionally assume that
\begin{itemize}
\item $f_n$ is nonegative,
\item the counting function $n\mapsto N(\mathscr{G},f_n)$ is almost everywhere nondecreasing, and
\item $\gamma(n) = \psi(\int_C f_n dM)$ for a nondecreasing function $\psi:\R\to \R^+$ for which $\sum_{n=1}^{\infty}\frac{1}{n\psi(n)} < \infty$,
\end{itemize}
then
\[
\frac{N(\mathscr{G},f_n)}{\displaystyle \int_C f_n\ dM} \overset{a.s.}{\longrightarrow} 1
\]
as $n\to\infty$, where the ``a.s." stands for converges almost surely with respect to $\mu$.
\end{enumerate}
\end{theorem}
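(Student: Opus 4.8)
The plan is to reduce all three parts to the moment bound~(\ref{eq:main_bound}) by Markov's inequality, together with the Borel--Cantelli lemma for the almost-sure statements. Write $\mu_n:=\int_C f_n\,dM$; by Lemma~\ref{lem:nicecase} and~(\ref{eq:exp}) this equals the expected value $\int_{\mathcal P}N(\mathscr G,f_n)\,d\mu(\mathscr G)$, it is finite because $f_n$ is an $L^1$-ordering, $N(\mathscr G,f_n)$ is a genuine random variable on a set of full $\mu$-measure, and the hypothesis $\liminf_n\lvert\mu_n\rvert>0$ forces $\mu_n\neq 0$ for all large $n$, so $N(\mathscr G,f_n)/\mu_n$ is eventually well defined. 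For parts (i) and (ii), given $\varepsilon>0$ and $n$ large we have $\{\lvert N(\mathscr G,f_n)/\mu_n-1\rvert\ge\varepsilon\}=\{\lvert N(\mathscr G,f_n)-\mu_n\rvert\ge\varepsilon\lvert\mu_n\rvert\}$, so Markov's inequality applied to the $k$-th absolute moment gives
\[
\mu\!\left(\left\lvert\frac{N(\mathscr G,f_n)}{\mu_n}-1\right\rvert\ge\varepsilon\right)\le\frac{1}{\varepsilon^{k}\lvert\mu_n\rvert^{k}}\int_{\mathcal P}\left\lvert N(\mathscr G,f_n)-\mu_n\right\rvert^{k}\,d\mu(\mathscr G)=O\!\left(\frac{1}{\varepsilon^{k}\gamma(n)}\right).
\]
Since $\gamma(n)\to\infty$ this proves~(i); if moreover $\sum_n 1/\gamma(n)<\infty$ the right-hand side is summable in $n$, so Borel--Cantelli makes the event $\lvert N(\mathscr G,f_n)/\mu_n-1\rvert\ge\varepsilon$ occur for only finitely many $n$ almost surely, and intersecting the resulting full-measure sets over $\varepsilon=1/\ell$, $\ell\in\N$, gives~(ii).

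For part~(iii) I would run a subsequence-and-monotonicity argument. First reduce: $f_n\ge 0$ forces $\mu_n\ge 0$ and $N(\mathscr G,f_n)\ge 0$; integrating the a.e.-nondecreasing function $n\mapsto N(\mathscr G,f_n)$ shows $n\mapsto\mu_n$ is nondecreasing; and $\mu_n\to\infty$, since otherwise $\gamma(n)=\psi(\mu_n)$ would be bounded, contradicting $\gamma(n)\to\infty$. Now fix $c>1$, set $n_j:=\min\{n:\mu_n\ge c^{j}\}$, let $\mathcal J:=\{j:n_{j+1}>n_j\}$, and for $j\in\mathcal J$ set $m_j:=n_{j+1}-1$; then $\{n_j:j\in\mathcal J\}$ and $\{m_j:j\in\mathcal J\}$ are strictly increasing subsequences with $\mu_{n_j},\mu_{m_j}\in[c^{j},c^{j+1})$ for $j\in\mathcal J$. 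A Cauchy-condensation comparison turns $\sum_n 1/(n\psi(n))<\infty$ into $\sum_j 1/\psi(c^{j})<\infty$. Applying the Markov estimate of the previous paragraph at the indices $n_j$ and $m_j$, and using $\gamma(n)=\psi(\mu_n)$ together with $\psi(\mu_{n_j}),\psi(\mu_{m_j})\ge\psi(c^{j})$, I obtain
\[
\sum_{j\in\mathcal J}\mu\!\left(\left\lvert\frac{N(\mathscr G,f_{n_j})}{\mu_{n_j}}-1\right\rvert\ge\varepsilon\right)=O_{\varepsilon}\!\left(\sum_{j}\frac{1}{\psi(c^{j})}\right)<\infty,
\]
and the same with $m_j$ in place of $n_j$. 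By Borel--Cantelli and the $\varepsilon=1/\ell$ trick, $N(\mathscr G,f_{n_j})/\mu_{n_j}\to 1$ and $N(\mathscr G,f_{m_j})/\mu_{m_j}\to 1$ almost surely as $j\to\infty$.

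To finish part~(iii), note that every sufficiently large $n$ lies in exactly one block $[n_j,m_j]$ with $j=j(n)\in\mathcal J$, and $j(n)\to\infty$ as $n\to\infty$; for such $n$ one has $\mu_n\in[c^{j},c^{j+1})$ and, by monotonicity and nonnegativity, $0\le N(\mathscr G,f_{n_j})\le N(\mathscr G,f_n)\le N(\mathscr G,f_{m_j})$, so that
\[
\frac{1}{c}\cdot\frac{N(\mathscr G,f_{n_j})}{\mu_{n_j}}\le\frac{N(\mathscr G,f_n)}{\mu_n}\le c\cdot\frac{N(\mathscr G,f_{m_j})}{\mu_{m_j}}.
\]
Letting $n\to\infty$ and invoking the subsequence limits, $1/c\le\liminf_n N(\mathscr G,f_n)/\mu_n\le\limsup_n N(\mathscr G,f_n)/\mu_n\le c$ almost surely; intersecting over $c=1+1/\ell$, $\ell\in\N$, gives $N(\mathscr G,f_n)/\mu_n\to 1$ almost surely.

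I expect the one genuinely delicate point to be the gap-filling in~(iii): because $\mu_n=\int_C f_n\,dM$ can jump by an arbitrarily large amount (a single isomorphism class of large moment may enter the support of $f_n$ in a single step), the naive sandwich between $N(\mathscr G,f_{n_j})$ and $N(\mathscr G,f_{n_{j+1}})$ breaks down, since $\mu_{n_{j+1}}/\mu_{n_j}$ is uncontrolled. Introducing the auxiliary upper endpoint $m_j=n_{j+1}-1$, whose moment satisfies $\mu_{m_j}<c^{j+1}$ by minimality of $n_{j+1}$, is what makes the upper estimate work; restricting $j$ to $\mathcal J$ is just the bookkeeping needed when $\mu_n$ jumps across several powers of $c$ simultaneously, which is otherwise harmless. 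The probabilistic inputs (Markov, Borel--Cantelli, the condensation estimate) are routine; the real content is recognizing that nonnegativity together with monotonicity of $n\mapsto N(\mathscr G,f_n)$ is exactly what allows interpolation between the subsequence, in the spirit of the Korchevsky--Petrov strong law.
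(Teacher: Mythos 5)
Your proof is correct, and for parts (i) and (ii) it is the same argument as the paper's (Theorems \ref{thm:WLLN} and \ref{thm:SLLN}): Markov's inequality applied to the $k$-th absolute moment of $N(\mathscr{G},f_n)-\int_C f_n\,dM$ at level $\epsilon\left\lvert\int_C f_n\,dM\right\rvert$, followed by Borel--Cantelli and the $\epsilon=1/\ell$ trick. For part (iii) you follow the same Etemadi/Korchevsky--Petrov architecture as the paper --- geometric subsequence $n_j=\min\{n:\int_C f_n\,dM\ge c^j\}$, Borel--Cantelli along the subsequence, interpolation by monotonicity --- but your interpolation step is genuinely different in one detail, and it is the better version. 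The paper sandwiches $N(\mathscr{G},f_n)$ between $N(\mathscr{G},f_{n_i})$ and $N(\mathscr{G},f_{n_{i+1}})$ and asserts that $\int_C f_{n_{i+1}}\,dM\big/\int_C f_{n_i}\,dM\le b^2$; but its displayed chain of inequalities only yields the \emph{lower} bound $\int_C f_{n_{i+1}}\,dM\ge b^{i+1}$, and if the moment sequence jumps across several powers of $b$ in a single step (which an $L^1$-ordering permits, since a single isomorphism class of large moment may enter the support of $f_n$ at once) that ratio is unbounded, so the paper's simplification is not justified as written. Your device of taking the upper interpolation endpoint $m_j=n_{j+1}-1$, whose moment is $<c^{j+1}$ by minimality of $n_{j+1}$, together with the restriction to $j\in\mathcal{J}$, keeps all three quantities $\mu_{n_j},\mu_n,\mu_{m_j}$ inside $[c^j,c^{j+1})$ and makes the sandwich airtight; the remaining ingredients (the condensation step turning $\sum 1/(n\psi(n))<\infty$ into $\sum_j 1/\psi(c^j)<\infty$, Borel--Cantelli along both subsequences, and intersecting over $c=1+1/\ell$) are routine and correctly executed. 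In short: same overall strategy, but your handling of the gap-filling closes a hole that the paper's write-up leaves open.
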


The primary new contribution of Theorem \ref{thm:LLNintro} is the connection between counting functions and the Law of Large Numbers. The probabilistic content is largely standard, and in fact the proof of Theorem \ref{thm:LLNintro}(iii) is essentially the same as that of \cite{korchevsky-petrov2010}. We follow a standard technique for bounding probabilities using Chebyshev's law and the first Borel-Cantelli lemma. Most standard references will have some version of this technique, sometimes in the context of the ``method of moments'', such as \cite[Chapter 4]{fischer2011}. This technique is ubiquitous to the point that examples can be found in proofs on Wikipedia \cite{wikipediaLLN} and a number of Math StackExchange posts such as \cite{mathstackLLN1,mathstackLLN2}.

If the $f_n$ are specifically characteristic functions with finite support in an increasing chain then Theorem \ref{thm:LLNintro}(iii) is a special case of \cite[Theorem 3]{korchevsky-petrov2010}. We state and prove these results separately from work in \cite{korchevsky-petrov2010} to allow for a more general class of orderings, although the proof requires no new ideas. For example, we can now directly consider orderings of the form
\[
f_n({\rm object}) = \begin{cases}
1 & n < {\rm order}({\rm object}) \le 2n\\
0 & \text{else}.
\end{cases}
\]
This type of ordering gives counting functions $N(\mathscr{G},f_n)$ that count objects landing in the moving interval $(n,2n]$. These sorts of counting functions are used to avoid errant behavior for small objects, and can be useful for proving averaging results.

In cases for which $\liminf_{n\to \infty}\left\lvert \int_C f_n dM\right\rvert = 0$, there is something we can say using the more general results Theorem \ref{thm:WLLN} and Theorem \ref{thm:SLLN} we prove in the main body of the paper. This requires a finer study of the rate of convergence, as we are no longer allowed to have $\int_C f_n dM$ on the denominator for such cases.

The bound on $k^{\rm th}$ moments in (\ref{eq:main_bound}) is a concrete way to state that the random variables are ``close enough to independent'' for the Law of Large Numbers. Our primary categorical result is on bounding this quantity in terms of purely categorical structures of $C$ and $\mathcal{P}$ together with the discrete moments of the ordering $f_n$.

Given an ordering $f_n:C/\cong \to \C$, we extend the ordering multiplicatively to the product category $f_n:C^k/\cong\to \C$ by
\[
f_n(G_1,...,G_{k}) = f_n(G_1)\cdots f_n(G_{k}).
\]
We also define the discrete measure $M^{(j)}:C^j/\cong \to \R_{\ge 0}\cup\{\infty\}$ given by the mixed moments of the measure $\mu$ as
\[
M_{(G_1,...,G_j)}^{(j)} = \int_{\mathcal{P}} \prod_{i=1}^j\#{\rm Epi}(\mathscr{G},G_i)\ d\mu(\mathscr{G}).
\]
This is sufficient information to prove a bound for (\ref{eq:main_bound}) for $k=2$.

\begin{theorem}\label{thm:bounding_mixed_moments_2}
Let $f_n:C/\cong \to \R$ be a real-valued $L^1$-ordering. Then
\[
\int_{\mathscr{G}}\left\lvert N(\mathscr{G},f_n) - \int_C f_n\ dM\right\rvert^{2}d\mu(\mathscr{G})  \ll_k \max_{j\in\{1,2\}}\int_{C^{2}\setminus E(2,M)} |f_n|\ dM^{(j)}(dM)^{2-j},
\]
where $E(2,M)\subseteq C^2$ is the full subcategory of pairs $(G_1,G_2)\in C^2$ such that
\begin{itemize}
\item[(a)] the epi-product $G_1\times_{\rm epi} G_2$ exists (defined in Definition \ref{def:epiproduct}), and
\item[(b)] $M_{G_1\times_{\rm epi} G_2} = M_{G_1} M_{G_2}$.
\end{itemize}
\end{theorem}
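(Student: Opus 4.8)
The plan is to expand the square, integrate, and use \eqref{eq:exp} to rewrite the left-hand side as $\int_{C^2}f_n\,\bigl(dM^{(2)} - d(M\times M)\bigr)$, then observe that this signed measure vanishes on $E(2,M)$ so that only $C^2\setminus E(2,M)$ contributes, and bound that contribution by the triangle inequality. We may assume the right-hand side of the asserted bound is finite, since otherwise there is nothing to prove. The vanishing of $dM^{(2)} - d(M\times M)$ on $E(2,M)$ is the heart of the matter and the step I expect to be the main obstacle: for $(G_1,G_2)\in E(2,M)$ the epi-product $G_1\times_{\rm epi}G_2$ exists, and I expect Definition~\ref{def:epiproduct} to be arranged precisely so that, whenever the epi-product exists, composition with the two structure epimorphisms yields a bijection ${\rm Epi}(\mathscr{G},G_1\times_{\rm epi}G_2)\cong{\rm Epi}(\mathscr{G},G_1)\times{\rm Epi}(\mathscr{G},G_2)$ for every pro-object $\mathscr{G}\in\mathcal{P}$; equivalently $\#{\rm Epi}(\mathscr{G},G_1\times_{\rm epi}G_2) = \#{\rm Epi}(\mathscr{G},G_1)\,\#{\rm Epi}(\mathscr{G},G_2)$. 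Integrating against $\mu$ gives $M^{(2)}_{(G_1,G_2)} = M_{G_1\times_{\rm epi}G_2}$, and condition (b) defining $E(2,M)$ then gives $M^{(2)}_{(G_1,G_2)} = M_{G_1}M_{G_2}$, i.e. $dM^{(2)}$ and $d(M\times M)$ agree there. The real work is in pinning down the universal property of $\times_{\rm epi}$ in a diamond category and checking it is compatible with the level topology on $\mathcal{P}$; this is also the only place the hypothesis that $\mu$ respects the product structure enters, via condition (b).

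For the reduction, expanding
\[
\left(N(\mathscr{G},f_n) - \int_C f_n\,dM\right)^{2} = N(\mathscr{G},f_n)^{2} - 2\left(\int_C f_n\,dM\right)N(\mathscr{G},f_n) + \left(\int_C f_n\,dM\right)^{2}
\]
and integrating against $d\mu$, the last two terms contribute $-\left(\int_C f_n\,dM\right)^{2} = -\int_{C^2}f_n\,d(M\times M)$ by \eqref{eq:exp} and Fubini. For the first term, note that since $\#{\rm Epi}(\mathscr{G},G_1)\#{\rm Epi}(\mathscr{G},G_2)\ge 0$, the double series $\sum_{(G_1,G_2)\in C^2}|f_n(G_1)f_n(G_2)|\,M^{(2)}_{(G_1,G_2)} = \int_{C^2}|f_n|\,dM^{(2)}$ is finite: on $E(2,M)$ its terms sum to at most $\left(\int_C|f_n|\,dM\right)^{2}<\infty$ by the vanishing above together with the $L^1$-ordering hypothesis, and on $C^2\setminus E(2,M)$ it equals the $j=2$ quantity on the right-hand side, hence is finite by assumption. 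So Fubini's theorem (for counting measure on $C^2$ against $\mu$), together with the a.e. absolute convergence of the defining series of $N(\mathscr{G},f_n)$ from Lemma~\ref{lem:nicecase}, gives $\int_{\mathcal P}N(\mathscr{G},f_n)^{2}\,d\mu = \int_{C^2}f_n\,dM^{(2)}$, whence
\[
\int_{\mathcal P}\left(N(\mathscr{G},f_n) - \int_C f_n\,dM\right)^{2}d\mu(\mathscr{G}) = \int_{C^2}f_n\,\bigl(dM^{(2)} - d(M\times M)\bigr).
\]

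Finally, since $dM^{(2)}$ and $d(M\times M)$ agree on $E(2,M)$, and $\int_{C^2}|f_n|\,dM^{(2)}<\infty$ (shown above) while $\int_{C^2}|f_n|\,d(M\times M) = \left(\int_C|f_n|\,dM\right)^{2}<\infty$, the integral over $C^2$ equals the integral over $C^2\setminus E(2,M)$, and by the triangle inequality it is at most $\int_{C^2\setminus E(2,M)}|f_n|\,dM^{(2)} + \int_{C^2\setminus E(2,M)}|f_n|\,d(M\times M)$. Since $M^{(1)}$ is the measure on $C$ with $M^{(1)}_G = \int_{\mathcal P}\#{\rm Epi}(\mathscr{G},G)\,d\mu = M_G$, we have $d(M\times M) = dM^{(1)}(dM)^{1}$, so the two summands are exactly the $j=2$ and $j=1$ terms in the statement; each is at most $\max_{j\in\{1,2\}}\int_{C^2\setminus E(2,M)}|f_n|\,dM^{(j)}(dM)^{2-j}$, and the total is at most twice this maximum. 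This gives the claimed estimate, with implied constant $2$ absorbed into $\ll_k$.
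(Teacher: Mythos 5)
Your proposal is correct and follows essentially the same route as the paper: the identity $\#{\rm Epi}(\mathscr{G},G_1\times_{\rm epi}G_2)=\#{\rm Epi}(\mathscr{G},G_1)\,\#{\rm Epi}(\mathscr{G},G_2)$ from the universal property, combined with condition (b), is exactly the paper's Lemma \ref{lem:uncorrelated} (that the two epimorphism counts are uncorrelated on $E(2,M)$), after which the expansion, restriction to $C^2\setminus E(2,M)$, and triangle inequality proceed as you describe. Your added care with Fubini and with finiteness of the right-hand side is a welcome tightening of details the paper leaves implicit, but it is not a different argument.
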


Taking away the full subcategory $E(2,M)$ of $C^{2}$ is the new content, and allows for bounds on the higher moments that can be computed solely from data given by $M$, $C$, and $f_n$ without reference to the measure $\mu$. The epi-product in condition (a) is a special type of product structure in a category, satisfying a similar universal property to the direct product with every morphism in the diagram being an epimorphism (Definition \ref{def:epiproduct}). We will show that the existence of an epi-product together with (b) implies the random variables $\#{\rm Epi}(\mathscr{G},G_1)$ and $\#{\rm Epi}(\mathscr{G},G_2)$ are uncorrelated. This will be key for bounding the moments. It will often be the case that determining the existence or nonexistence of epi-products is easier than approaching the bound (\ref{eq:main_bound}) in Theorem \ref{thm:LLNintro} directly. We include some examples using this approach in Section \ref{sec:examples}.

We will prove a similar bound for (\ref{eq:main_bound}) when $k>2$ is even, but this will require more assumptions on the category. These extra assumptions can prove useful for applying Theorem \ref{thm:bounding_mixed_moments_2}, as they give us extra tools for calculating the integral with respect to $M^{(2)}$. Lemma \ref{lem:MG1Gk} will be very useful in this regard, and we give an example demonstrating its use in Section \ref{sec:examples}.

\begin{theorem}\label{thm:bounding_mixed_moments_2k}
Suppose $C$ is a category with for which every morphism factors uniquely (up to isomorphism) as a composition of an epimorphism with a monomorphism. Let $f_n:C/\cong \to \R$ be a real-valued $L^1$-ordering and $k$ a positive integer. Then
\[
\int_{\mathscr{G}}\left\lvert N(\mathscr{G},f_n) - \int_C f_n\ dM\right\rvert^{2k}d\mu(\mathscr{G})  \ll_k \max_{j\in\{1,...,2k\}}\int_{C^{2k}\setminus E(2k,M)} |f_n|\ dM^{(j)}(dM)^{2k-j},
\]
where $E(2k,M)\subseteq C^{2k}$ is the full subcategory of tuples $(G_1,G_2,...,G_{2k})$ for which there exists an index $i$ such that for each subset $A\subseteq\{1,...,k\}$ with $i\not\in A$
\begin{itemize}
\item[(a)] the product object $G_A:=\prod_{m\in A} G_m$ exists in $C^{2k}$,
\item[(b)] $G_A$ has finitely many subobjects up to isomorphism,
\item[(c)] the epi-product $G_i\times_{\rm epi} H$ exists for each subobject $H\hookrightarrow G_A$ (defined in Definition \ref{def:epiproduct}), and
\item[(d)] $M_{G_i\times_{\rm epi} H} = M_{G_i} M_{H}$ for each subobject $H\hookrightarrow G_A$.
\end{itemize}
\end{theorem}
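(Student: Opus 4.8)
Write $Y_G(\mathscr{G})=\#{\rm Epi}(\mathscr{G},G)-M_G$, so that $N(\mathscr{G},f_n)-\int_C f_n\,dM=\sum_{G\in C/\cong}f_n(G)Y_G(\mathscr{G})$. The plan is to expand the $2k$-th power,
\[
\int_{\mathcal{P}}\Bigl(N(\mathscr{G},f_n)-\int_C f_n\,dM\Bigr)^{2k}d\mu=\sum_{(G_1,\dots,G_{2k})}f_n(G_1)\cdots f_n(G_{2k})\int_{\mathcal{P}}\prod_{m=1}^{2k}Y_{G_m}\,d\mu,
\]
and to show that the central mixed moment $\int_{\mathcal{P}}\prod_m Y_{G_m}\,d\mu$ vanishes whenever $(G_1,\dots,G_{2k})\in E(2k,M)$ while being controlled by the moment measures $M^{(j)}$ off of $E(2k,M)$. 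We may assume the right-hand side of the asserted inequality is finite, since otherwise there is nothing to prove; this makes the relevant mixed moments finite and justifies the interchanges of sums and integrals below. First I would reduce to $f_n$ of finite support by truncating to finitely many isomorphism classes, proving the bound uniformly in the truncation, and then applying Fatou's lemma together with the $\mu$-a.e.\ convergence of the truncated counting functions to $N(\mathscr{G},f_n)$, which holds for an $L^1$-ordering (Lemma~\ref{lem:nicecase}).

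The heart of the argument is the vanishing on $E(2k,M)$. Fix such a tuple and the index $i$ witnessing it. Expanding $\prod_{m\ne i}Y_{G_m}=\prod_{m\ne i}(\#{\rm Epi}(\mathscr{G},G_m)-M_{G_m})$ into a signed sum over subsets $A\subseteq\{1,\dots,2k\}\setminus\{i\}$ reduces the claim to showing, for each such $A$, that $\#{\rm Epi}(\mathscr{G},G_i)$ is uncorrelated with $\prod_{m\in A}\#{\rm Epi}(\mathscr{G},G_m)$, i.e.\
\[
\int_{\mathcal{P}}\#{\rm Epi}(\mathscr{G},G_i)\prod_{m\in A}\#{\rm Epi}(\mathscr{G},G_m)\,d\mu=M_{G_i}\int_{\mathcal{P}}\prod_{m\in A}\#{\rm Epi}(\mathscr{G},G_m)\,d\mu.
\]
This is where the epi--mono factorization hypothesis enters: by the universal property of $G_A=\prod_{m\in A}G_m$ (which exists by (a)) followed by the unique epi--mono factorization, an $|A|$-tuple of epimorphisms $(\mathscr{G}\twoheadrightarrow G_m)_{m\in A}$ corresponds bijectively to a subobject $H\hookrightarrow G_A$ on which each projection to $G_m$ restricts to an epimorphism, together with an epimorphism $\mathscr{G}\twoheadrightarrow H$ (one repeatedly uses that $gf$ epi forces $g$ epi). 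Hence $\prod_{m\in A}\#{\rm Epi}(\mathscr{G},G_m)=\sum_H\#{\rm Epi}(\mathscr{G},H)$, a finite sum by (b). For each such $H$ the epi-product $G_i\times_{\rm epi}H$ exists by (c), and its universal property (Definition~\ref{def:epiproduct}, whose comparison morphism is itself an epimorphism) gives the pointwise identity $\#{\rm Epi}(\mathscr{G},G_i)\,\#{\rm Epi}(\mathscr{G},H)=\#{\rm Epi}(\mathscr{G},G_i\times_{\rm epi}H)$; integrating and invoking (d) yields $\int_{\mathcal{P}}\#{\rm Epi}(\mathscr{G},G_i)\,\#{\rm Epi}(\mathscr{G},H)\,d\mu=M_{G_i\times_{\rm epi}H}=M_{G_i}M_H=M_{G_i}\int_{\mathcal{P}}\#{\rm Epi}(\mathscr{G},H)\,d\mu$, and summing over the finitely many $H$ gives the uncorrelatedness.

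For the tuples $(G_1,\dots,G_{2k})\notin E(2k,M)$ no cancellation is needed: I would bound $\bigl|\int_{\mathcal{P}}\prod_m Y_{G_m}\,d\mu\bigr|\le\int_{\mathcal{P}}\prod_m\bigl(\#{\rm Epi}(\mathscr{G},G_m)+M_{G_m}\bigr)\,d\mu=\sum_{B\subseteq\{1,\dots,2k\}}\bigl(\prod_{m\notin B}M_{G_m}\bigr)M^{(|B|)}_{(G_m)_{m\in B}}$, multiply by $|f_n(G_1)\cdots f_n(G_{2k})|$, sum over $(G_1,\dots,G_{2k})\notin E(2k,M)$, and interchange the (now finite) sums. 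The contribution of each $B$ with $|B|=j$ equals $\int_{C^{2k}\setminus E(2k,M)}|f_n|\,dM^{(j)}(dM)^{2k-j}$, using that $f_n$ (extended multiplicatively) and $E(2k,M)$ are symmetric under permuting the $2k$ coordinates and that $M^{(1)}=M$ absorbs the $B=\emptyset$ term. Bounding the $2^{2k}$ subsets $B$ by the maximum over $j\in\{1,\dots,2k\}$ and undoing the truncation completes the proof.

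I expect the second paragraph to be the main obstacle: setting up the Goursat-type bijection correctly in a general category with only an epi--mono factorization system (checking that the relevant projections restrict to epimorphisms and that passage to isomorphism classes of subobjects is harmless), and — crucially — extracting the \emph{pointwise} identity $\#{\rm Epi}(\mathscr{G},G_i)\,\#{\rm Epi}(\mathscr{G},H)=\#{\rm Epi}(\mathscr{G},G_i\times_{\rm epi}H)$ from the universal property of the epi-product rather than a mere equality of expectations, with condition (d) being exactly what converts it into the desired uncorrelatedness. The $k=1$ case recovers a form of Theorem~\ref{thm:bounding_mixed_moments_2}, with $G_1\times_{\rm epi}G_2$ playing the role of $G_i\times_{\rm epi}H$; everything outside the second paragraph is combinatorial bookkeeping or routine measure theory.
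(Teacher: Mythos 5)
Your proposal is correct and follows essentially the same route as the paper: expand the $2k$-th central moment, isolate the distinguished index $i$, use the product-plus-epi-mono-factorization bijection (the paper's Lemma \ref{lem:MG1Gk}) to rewrite $\prod_{m\in A}\#{\rm Epi}(\mathscr{G},G_m)$ as a finite sum of $\#{\rm Epi}(\mathscr{G},H)$ over subobjects $H\hookrightarrow G_A$, and then kill each term via the pointwise epi-product identity together with hypothesis (d) (the paper's Lemma \ref{lem:uncorrelated}), with the off-$E(2k,M)$ terms absorbed into the max over $j$. The truncation/Fatou step you add is a harmless technical variant of the paper's direct appeal to the $L^1$ hypothesis to justify interchanging sum and integral.
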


Unique factorization of morphisms is known for many categories of interest, including finite sets, finite groups, and finite modules over a ring. In fact, each of these categories automatically satisfies parts (a) and (b) as well. Thus, in numerous categories of interest, Theorem \ref{thm:bounding_mixed_moments_2k} requires no extra input over Theorem \ref{thm:bounding_mixed_moments_2}.

We also prove an upper bound utilizing simpler categorical structures that holds without the restrictions of Theorem \ref{thm:LLNintro}.

\begin{corollary}\label{cor:LLNintrobound}
Let $C$ be a diamond category which has countably many isomorphism classes, $\mu$ be a probability measure on the isomorphism classes of the corresponding category of pro-objects $\mathcal{P}$ with finite moments $M_G$, and $f_n:C/\cong \to \C$ an $L^1$-ordering.

Then for any $\epsilon > 0$ and any positive integer $k$
\[
\frac{N(\mathscr{G},f_n)}{\displaystyle n^{\frac{1+\epsilon}{k}}\max_{j\in\{1,...,k\}}\left\{\int_{C^{k}} |f_n|\ dM^{(j)}(dM)^{k-j}\right\}^{1/k}} \overset{a.s.}{\longrightarrow} 0
\]
as $n\to\infty$, where the ``a.s." stands for converges almost surely with respect to $\mu$.
\end{corollary}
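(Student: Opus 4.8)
The plan is to obtain the corollary from a single crude moment estimate followed by the standard Chebyshev-plus-Borel--Cantelli argument used throughout this area (and in the proof of Theorem~\ref{thm:LLNintro}), applied here to the \emph{uncentered} $k$th moment of $N(\mathscr{G},f_n)$. Write
\[
B_n := \max_{j\in\{1,\dots,k\}}\int_{C^{k}}|f_n|\ dM^{(j)}(dM)^{k-j},\qquad c_n := n^{(1+\epsilon)/k}B_n^{1/k},
\]
so the claim is that $N(\mathscr{G},f_n)/c_n\to 0$ almost surely. If $B_n=\infty$ then $c_n=\infty$ and, since $N(\mathscr{G},f_n)$ is a.e.\ finite by the $L^1$-ordering hypothesis and Lemma~\ref{lem:nicecase}, that term of the sequence is identically $0$ a.e.; if $B_n=0$ then (as the next step shows) $N(\mathscr{G},f_n)=0$ a.e. So it suffices to treat the indices with $0<B_n<\infty$, keeping the degenerate indices in the Borel--Cantelli sum below as $\mu$-null events.

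The key (and essentially only non-boilerplate) step is the bound
\[
\int_{\mathscr{G}}\bigl|N(\mathscr{G},f_n)\bigr|^{k}\,d\mu(\mathscr{G})\;\le\;\int_{C^{k}}|f_n|\ dM^{(k)}\;\le\;B_n.
\]
Since $\#{\rm Epi}(\mathscr{G},G)\ge 0$ for all $\mathscr{G}$ and $G$, we have the pointwise domination $|N(\mathscr{G},f_n)|^{k}\le\bigl(\sum_{G\in C/\cong}|f_n(G)|\#{\rm Epi}(\mathscr{G},G)\bigr)^{k}$. All terms being nonnegative, Tonelli's theorem lets me expand the $k$th power, interchange the countable sum over tuples $(G_1,\dots,G_k)\in C^k/\cong$ with the integral against $\mu$, and recognize $\int_{\mathscr{G}}\prod_{i=1}^{k}\#{\rm Epi}(\mathscr{G},G_i)\,d\mu(\mathscr{G})=M^{(k)}_{(G_1,\dots,G_k)}$; this produces exactly $\int_{C^k}|f_n|\,dM^{(k)}$, which is the $j=k$ term of the maximum defining $B_n$ and hence is $\le B_n$. (In particular, when $B_n<\infty$ this reconfirms that $N(\mathscr{G},f_n)$ converges absolutely a.e., and when $B_n=0$ it shows $N(\mathscr{G},f_n)=0$ a.e.)

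With the moment bound in hand, fix $\delta>0$. For $n$ with $0<B_n<\infty$, Markov's inequality applied to $|N(\mathscr{G},f_n)|^{k}$ gives
\[
\mu\!\left(\left|\frac{N(\mathscr{G},f_n)}{c_n}\right|>\delta\right)=\mu\!\left(|N(\mathscr{G},f_n)|^{k}>\delta^{k}c_n^{k}\right)\le\frac{1}{\delta^{k}c_n^{k}}\int_{\mathscr{G}}|N(\mathscr{G},f_n)|^{k}\,d\mu(\mathscr{G})\le\frac{B_n}{\delta^{k}n^{1+\epsilon}B_n}=\frac{1}{\delta^{k}n^{1+\epsilon}},
\]
and for the remaining $n$ the left-hand event is $\mu$-null (interpreting $N(\mathscr{G},f_n)/c_n$ as $0$), so the same inequality holds trivially. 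Because $\epsilon>0$ the series $\sum_{n\ge 1}\delta^{-k}n^{-(1+\epsilon)}$ converges, so the first Borel--Cantelli lemma yields that for $\mu$-almost every $\mathscr{G}$ only finitely many $n$ satisfy $|N(\mathscr{G},f_n)/c_n|>\delta$, i.e.\ $\limsup_{n\to\infty}|N(\mathscr{G},f_n)/c_n|\le\delta$ almost surely. Intersecting the resulting full-measure sets over a sequence $\delta_m\downarrow 0$ gives $\limsup_{n\to\infty}|N(\mathscr{G},f_n)/c_n|=0$ almost surely, which is the assertion. (Equivalently, one can invoke the general strong law Theorem~\ref{thm:SLLN} with normalizing sequence $c_n$ in place of this last paragraph.)

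I do not anticipate a real obstacle: the statement is deliberately weaker than Theorem~\ref{thm:LLNintro}, and the only substantive input beyond routine probability is the Tonelli interchange of the countable sum over $C^k/\cong$ with the integral against $\mu$ — legitimate precisely because $\#{\rm Epi}$ and $|f_n|$ are nonnegative, so no integrability need be checked in advance. The only care required is bookkeeping around the degenerate values $B_n\in\{0,\infty\}$, and making sure the various almost-sure statements are collated on a single full-measure set when diagonalizing over $\delta_m$ and intersecting with the full-measure set on which the counting function is defined (Lemma~\ref{lem:nicecase}).
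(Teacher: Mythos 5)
Your proof is correct, and it takes a slightly but genuinely different route from the paper's. The paper bounds the \emph{centered} moment $\int_{\mathscr{G}}\lvert N(\mathscr{G},f_n)-\int_C f_n\,dM\rvert^{k}\,d\mu$ by writing the integrand as $\bigl(\sum_G |f_n(G)|(\#\mathrm{Epi}(\mathscr{G},G)+M_G)\bigr)^{k}$, expanding, and collecting all the mixed terms — which is why every $j\in\{0,1,\dots,k\}$ appears in its bound — and then invokes Theorem~\ref{thm:SLLN} with $E(n)=n^{(1+\epsilon)/k}\max_j\{\cdots\}^{1/k}$ and $\gamma(n)=n^{1+\epsilon}$; to pass from the centered conclusion of Theorem~\ref{thm:SLLN} to the statement about $N(\mathscr{G},f_n)$ itself one also needs $\lvert\int_C f_n\,dM\rvert/E(n)\to 0$, which follows since the $j=1$ term of the max dominates $(\int_C|f_n|\,dM)^k$ (the paper leaves this last step implicit). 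You instead bound the \emph{uncentered} moment $\int_{\mathscr{G}}|N(\mathscr{G},f_n)|^{k}\,d\mu\le\int_{C^k}|f_n|\,dM^{(k)}$ by Tonelli, which requires only the single $j=k$ term of the maximum, and then run Markov plus Borel--Cantelli directly. Your version is cleaner in three small ways: it needs no triangle-inequality expansion or permutation bookkeeping, it proves the assertion about $N$ itself without the extra comparison of $\int_C f_n\,dM$ against $E(n)$, and it works verbatim for complex-valued $f_n$ (the paper routes through Theorem~\ref{thm:SLLN}, which is stated for real-valued orderings, though its proof adapts). The price is only that your argument establishes exactly the corollary rather than the centered convergence statement; since the corollary's denominator contains the full max over $j$, your bound in fact proves the marginally stronger statement with only the $j=k$ term in the denominator. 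Your handling of the degenerate cases $B_n\in\{0,\infty\}$ is careful and correct.
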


Corollary \ref{cor:LLNintrobound} is motivated by Malle's predicted weak upper bound, although the correspondence is less than obvious. In the classical case $f_n$ is a nonegative function so that $\int f_n\ dM = \int |f_n|\ dM$, and when $j=1$ the denominator is given by
\[
n^{\frac{1+\epsilon}{k}}\int_C f_n\ dM.
\]
Ignoring the other values of $j$ and taking $k$ sufficiently large, Corollary \ref{cor:LLNintrobound} appears to give an upper bound of the form
\[
N(\mathscr{G},f_n)\ll n^{\epsilon}\int_C f_n\ dM
\]
with probability $1$, where we recall that $n$ is playing the role of $X$ in Malle's counting function. Of course, we cannot actually ignore the other values of $j$, and depending on $C$ and $M$ it is possible for these bounds to be worse.

Theorem \ref{thm:LLNintro} and Corollary \ref{cor:LLNintrobound} can be used to provide evidence for conjectures about counting functions in great generality. We state this as an explicit heuristic:

\begin{heuristic}[Vast Counting Heuristic]\label{heur:vast}
Let $C$ be a diamond category which has countably many isomorphism classes and let $\mu$ be a probability measure on the isomorphism classes of the corresponding category of pro-objects $\mathcal{P}$ with finite moments $M_G$. Choose an $L^1$-ordering $f_n$.

Let $\mathbb{G}$ be an element of $\mathcal{P}/\cong$ for which $N(\mathbb{G},f_n)$ is well-defined. If $\mathbb{G}$ is an object we expect to behave typically$^{*}$ with respect to the probability measure $\mu$, such as an object coming from arithmetic, then
\begin{itemize}
\item[(i)] (Strong form) If $f_n$ satisfies the bound in (\ref{eq:main_bound}) for some positive integer $k$, then
\[
N(\mathbb{G},f_n) \sim \int_C f_n\ dM
\]
as $n\to \infty$.
\item[(ii)] (Weak form) For each $\epsilon > 0$ and each positive integer $k$
\[
N(\mathbb{G},f_n) \ll n^{\frac{1+\epsilon}{k}}\max_{j\in \{0,...,k\}}\left(\int_{C^{j}} |f_n|\ dM^{(j)}(dM)^{k-j}\right)^{1/k}.
\]
\end{itemize}
\end{heuristic}

The asterisk on ``typically'' is meant to highlight that this word is where the intricacies of counting conjectures lie, and in particular is in recognition of counter-examples to Malle's conjecture and the like. For number field counting, Malle's original conjecture is incorrect due to behavior coming from roots of unity that Malle did not initially consider. For the purpose of a general statement, this can be interpreted as saying ``if there are no obstructions, this is what we expect". Heuristic \ref{heur:vast} provides a baseline for counting predictions when we believe there are no ``non-random" structures of $\mathbb{G}$ missing from the measure $\mu$ on the category $\mathcal{P}$. Issues, like those occurring in Malle's conjecture from roots of unity, might be fixed by either (a) incorporating the missing structure into a modified version of the category (this is analogous to the approach commonly take in the study of Malle's conjecture), or (b) excluding epimorphisms from the count which are affected by the missing structure (this is the approach taken for the Batyrev-Manin conjecture).


The literature on proving new cases of the Law of Large Numbers is vast, and it is possible that Theorem \ref{thm:LLNintro} may be extended with finer probabilistic arguments. From a categorical perspective, we've stated Theorem \ref{thm:bounding_mixed_moments_2}, Theorem \ref{thm:bounding_mixed_moments_2k}, Corollary \ref{cor:LLNintrobound} and Heuristic \ref{heur:vast} in the greatest generality possible so that they might be applied to make predictions on a variety of different counting functions in arithmetic statistics independent of the author's immediate knowledge. We hope that these results will be useful as a starting point for many future predictions for the asymptotic growth rates of counting functions.

\subsection{Layout of the Paper}

We begin in Section \ref{sec:defLLN} with a discussion of what we mean by an asymptotic version of the Law of Large Numbers. This is conceptually important for understanding the methods of this paper, as the references to the Law of Large Numbers are not just in analogy. Korchevskey--Petrov's result \cite[Theorem 3]{korchevsky-petrov2010} is a particular example of the type of Law of Large Numbers result we consider.

We prove well-definedness of the counting function in Section \ref{sec:countingfunction}. We give the probabilistic proofs of Theorem \ref{thm:LLNintro} and Corollary \ref{cor:LLNintrobound} in Section \ref{sec:LLNstate}. These proofs closely follow the methods of \cite{korchevsky-petrov2010}. We define epi-products in Section \ref{sec:epi-prod}, prove some useful features of these objects, and prove Theorems \ref{thm:bounding_mixed_moments_2} and \ref{thm:bounding_mixed_moments_2k}.

Lastly, we include Section \ref{sec:examples} to work out a couple of important special cases. The arithmetic statistics questions that primarily motivated the author will require independent work to set up the necessary category, measure, and orderings which the author will present in a forthcoming paper \cite{alberts2023}. We instead include simpler examples where these steps are either shorter or done by previous work such as in \cite{sawin-wood2022}, and we pay particular attention to when epi-products exist in such cases. Despite the easier nature of these examples, we recover existing important random models from number theory as special cases of Theorem \ref{thm:LLNintro} and Heuristic \ref{heur:vast}.

\section*{Acknowledgments}

The author would like to thank Melanie Matchett Wood for feedback and helpful discussions on the mechanics on \cite{sawin-wood2022}.

\section{An asymptotic version of the Law of Large Numbers}\label{sec:defLLN}
Given a sequence of independent, identically distributed random variables $\{X_i\}$ the Law of Large Numbers says that we expect $\frac{1}{n}(X_1+\cdots+X_n)$ to converge to the expected value of $X_i$ in some sense. A version of this statement can sometimes also hold for random variables which are neither independent nor identically distributed. If the variables are not identically distributed, then $\mathbb{E}[X_i]$ may vary depending on $i$, which necessarily alters the statement we want to make. We will instead be comparing
\begin{align*}
\sum_{i=1}^n X_i &&\text{with}&& \sum_{i=1}^n\mathbb{E}[X_i]
\end{align*}
as $n\to \infty$. When the random variables are pairwise independent, identically distributed, and have bounded variance in some sense, the difference of these is known to grow slower than $n$ almost surely by Kolmogorov's strong law \cite{etemadi1981,sen-singer1993}. Rather than a convergence statement, we will state the Law of Large Numbers as an asymptotic statement. The conclusion of Kolmogorov's strong law would then be written as
\[
{\rm Prob}\left(\frac{1}{n}\sum_{i=1}^n X_i - \frac{1}{n}\sum_{i=1}^n\mathbb{E}[X_i] = o(1)\text{ as }n\to \infty\right) = 1.
\]
We can understand this term from the perspective of asymptotic vounting functions as picking out a ``main term + O(error term)''. Kolmogorov's strong law can then be understood as
\[
\sum_{i=1}^n X_i = \sum_{i=1}^n \mathbb{E}[X_i] + o(n)
\]
as $n\to \infty$ with probability $1$. It is often the case with the Law of Large Numbers that such results are written as
\[
\frac{\sum_{i=1}^n X_i  - \sum_{i=1}^n \mathbb{E}[X_i]}{n} \overset{a.s.}{\longrightarrow} 0.
\]
The statements of Theorem \ref{thm:LLNintro} can be interpretted as something similar to this. $N(\mathscr{G},f_n)$ is defined to be a sum of random variables $f_n(G)\#{\rm Epi}(\mathscr{G},G)$, so if we label $X_G = f_n(G)\#{\rm Epi}(\mathscr{G},G)$ we can understand the counting function as
\[
N(\mathscr{G},f_n) = \sum_{G, f_n(G) \ne 0} X_G.
\]
The ordering is analogous to a statement of the form ``${\rm order}(G)<n$'', although our results apply to more general orderings. We can then see that $N(\mathscr{G},f_n)$ behaves like a sum of random variables $X_G$, whose length is growing with $n$. The philosophy of the Law of Large Numbers suggests that we should expect this to be close to $\sum_{G, f_n(G) \ne 0} \mathbb{E}[X_G] = \int_C f_n\ dM$. However, there is no reason to expect an error term like $o(n)$ to hold. This isn't precisely what we want, we are interested in a statement of the form
\[
N(\mathscr{G},f_n) = \int_C f_n\ dM + o\left(\int_C f_n\ dM\right)
\]
with probability $1$. This error does not necessarily agree with $o(n)$.

We consider two examples to demonstrate the intricacies of the error term.
\begin{itemize}
\item Consider the following example: Let $X$ be a random variable equal to $1$ with probability $1/2$ and $0$ with probability $1/2$, and consider the dependent sequence $X_n = \frac{1}{n}X$. Classical SLLN certainly holds for this sequence, as
\begin{align*}
\left\lvert\sum_{i=1}^n X_i - \sum_{i=1}^n \mathbb{E}[X_i]\right\rvert &\le \sum_{i=1}^n X_i + \sum_{i=1}^n \frac{1}{2i}\\
&\le \sum_{i=1}^n \frac{1}{i} + \sum_{i=1}^n \frac{1}{2i}\\
&=\sum_{i=1}^n \frac{1}{3i}\\
&= O(\log n) = o(n).
\end{align*}
However, the $X_n$ are identically 0 with probability $1/2$, so
\[
{\rm Prob}\left(\sum _{i=1}^n X_i \sim \sum_{i=1}^n \mathbb{E}[X_i]\text{ as }n\to \infty\right) \le \frac{1}{2}.
\]
This example satisfies the $o(n)$ error term, but it is certainly not what we want. What is happening here is that
\[
\sum_{i=1}^n \mathbb{E}[X_i] = o(n)
\]
right from the start, so the classical error for the Law of Large Numbers does not actually tell us much about how close $\sum X_i$ is to $\sum \mathbb{E}[X_i]$.

\item If we instead work backwards, we might ask for error terms which are smaller than
\[
o\left(\sum_{i=1}^n \mathbb{E}[X_i]\right),
\]
so that the sum of expected values can rightly be recognized as the main term. In the classical setting with identically distributed random variables with nonzero expected value $E$, these notions coincide as
\[
o\left(\sum_{i=1}^n \mathbb{E}[X_i] \right) = o(En) = o(n).
\]
In fact, this is known more generally. Korchevsky--Petrov \cite[Theorem 3]{korchevsky-petrov2010} give sufficient conditions for when
\[
\frac{S_n - \mathbb{E}[S_n]}{\mathbb{E}[S_n]} \overset{a.s.}{\longrightarrow}1
\]
as $n\to \infty$, when $S_n = \sum_{i=1}^n X_n$ is a sum of \emph{nonegative} random variables. Theorem \ref{thm:LLNintro}(iii) is based on their result.

However, in the case that $\mathbb{E}[X_i] = 0$ for all $i$ this is asking for a trivial error term and a trivial main term. This is certainly an issue, as in the classical case with a sequence of independent identically distributed random variables with mean $0$ the central limit theorem forces the error term to be about $\sqrt{n}$ (i.e., not $o(0)$). The case that all expected values are zero is important in the classical study and applications of the Law of Large Numbers, so we do not want to exclude this case.
\end{itemize}

The condition $\liminf_{n\to \infty}\left\lvert\int_C f_n dM\right\rvert > 0$ in Theorem \ref{thm:LLNintro} is a consequence of the second bullet above. This corresponds to the condition that $\mathbb{E}[S_n] \ne 0$ for all but finitely many $n$, so that we can make sense of having $\mathbb{E}[S_n]$ on the denominator. This issue can alternatively be fixed avoided by relaxing what we want for an error term. Some options might include $o\left(\sum_{i=1}^n \mathbb{E}[|X_i|]\right)$ to guarantee the sum is nonzero unless $X_i$ are identically zero, or something of the form $o\left(n^{1/2+\epsilon}\right)$ to give an error of a similar nature to the Central Limit Limit. We give a version of Theorem \ref{thm:LLNintro}(i,ii) in Theorems \ref{thm:WLLN} and \ref{thm:SLLN} with explicit rates of convergence. These results can be used to produce probability $1$ statements even in the case that $\liminf \left\lvert\int_C f_n dM\right\rvert = 0$, although they will no longer be of the form
\[
N(\mathscr{G},f_n)\sim \int_C f_n dM.
\]

\section{Well-defined counting functions}\label{sec:countingfunction}

A sequence of such $L^1$-functions $f_n$ is called an \textbf{$L^1$-ordering} as in Definition \ref{def:L1ordering}. As in Definition \ref{def:countingfunction}, we define the counting function
\[
N(\mathscr{G},f_n) = \sum_{G\in C/\cong} f_n(G) \#{\rm Epi}(\mathscr{G},G)
\]
for each $\mathscr{G}\in \mathcal{P}$. By allowing $f_n$ to have possibly infinite support, we need to make sure that the counting function is well-defined and the linearity of expectation behaves as we expect. In the case of $L^1$-orderings, we prove this below:

\begin{lemma}\label{lem:nicecase}
If $f_n$ is an $L^1$-ordering, then
\begin{itemize}
\item[(a)] $N(\mathscr{G},f_n)$ is well-defined as a function on the positive integers $n$ almost surely (i.e. with probability $1$) with respect to $\mu$, and
\item[(b)] $\displaystyle\int_{\mathcal{P}} N(\mathscr{G},f_n)\ d\mu(\mathscr{G}) = \int_C f_n\ dM.$
\end{itemize}
\end{lemma}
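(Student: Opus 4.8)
The plan is to deduce both statements from the Fubini--Tonelli theorem applied to the product of $\mu$ with counting measure on the countable set $C/\cong$, with the $L^1$-ordering hypothesis supplying exactly the integrability input. Throughout I use that $\mathscr{G}\mapsto \#{\rm Epi}(\mathscr{G},G)$ is a measurable $[0,\infty]$-valued function on $\mathcal{P}$ for each finite $G$ --- this is built into the Sawin--Wood framework (it is measurable for the level topology, and its $\mu$-integral is by definition the finite moment $M_G$, assumed finite) --- so that each partial sum $\sum_{G\in S} f_n(G)\#{\rm Epi}(\mathscr{G},G)$ over a finite $S\subseteq C/\cong$ is measurable in $\mathscr{G}$, and hence so is $N(\mathscr{G},f_n)$ wherever the series converges.

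First I would fix $n$ and apply Tonelli's theorem (equivalently, monotone convergence along increasing partial sums) to the nonnegative function $(G,\mathscr{G})\mapsto |f_n(G)|\,\#{\rm Epi}(\mathscr{G},G)$, obtaining
\[
\int_{\mathcal{P}}\ \sum_{G\in C/\cong} |f_n(G)|\,\#{\rm Epi}(\mathscr{G},G)\ d\mu(\mathscr{G}) \;=\; \sum_{G\in C/\cong}|f_n(G)|\int_{\mathcal{P}}\#{\rm Epi}(\mathscr{G},G)\,d\mu(\mathscr{G}) \;=\; \sum_{G\in C/\cong}|f_n(G)|\,M_G,
\]
which is finite precisely because $f_n$ is an $L^1$-ordering. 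Hence the inner sum is finite for $\mu$-almost every $\mathscr{G}$; let $\Omega_n$ be the full-measure set on which it is finite. Setting $\Omega=\bigcap_{n=1}^\infty \Omega_n$, a countable intersection of full-measure sets still has measure $1$, and for every $\mathscr{G}\in\Omega$ and every positive integer $n$ the series $\sum_G f_n(G)\#{\rm Epi}(\mathscr{G},G)$ converges absolutely. Thus $N(\mathscr{G},f_n)$ is well-defined as a function on the positive integers for all $\mathscr{G}$ in the measure-$1$ set $\Omega$, which is (a). (If one allows $\#{\rm Epi}$ to take the value $\infty$, the same bound shows it is finite on $\Omega$ for every $G$ with $f_n(G)\neq 0$.)

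For (b), the computation above shows $\sum_G |f_n(G)|\,\#{\rm Epi}(\mathscr{G},G)\in L^1(\mathcal{P},\mu)$ for each fixed $n$, which is exactly the hypothesis of Fubini's theorem for $(G,\mathscr{G})\mapsto f_n(G)\#{\rm Epi}(\mathscr{G},G)$ (splitting $f_n$ into real and imaginary parts, and each into positive and negative parts, if $f_n$ is complex-valued, or invoking the $L^1$ form directly). Interchanging sum and integral then gives $N(\mathscr{G},f_n)\in L^1(\mu)$ and
\[
\int_{\mathcal{P}}N(\mathscr{G},f_n)\,d\mu(\mathscr{G}) \;=\; \sum_{G\in C/\cong} f_n(G)\int_{\mathcal{P}}\#{\rm Epi}(\mathscr{G},G)\,d\mu(\mathscr{G}) \;=\; \sum_{G\in C/\cong}f_n(G)\,M_G \;=\; \int_C f_n\,dM.
\]

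I do not expect a genuine obstacle here: the only points needing care are the measurability of $\mathscr{G}\mapsto \#{\rm Epi}(\mathscr{G},G)$ and the legitimacy of treating the sum over $C/\cong$ as integration against counting measure, both routine given the standing Sawin--Wood hypotheses (in particular countably many isomorphism classes in $C$, so the index set is countable). The substance of the lemma is simply the recognition that the $L^1$-ordering condition is precisely the integrability needed to run Fubini--Tonelli, and that countability of the index set $\{n\}$ lets a single full-measure set $\Omega$ work simultaneously for all $n$.
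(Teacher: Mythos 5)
Your proposal is correct and follows essentially the same route as the paper: apply Tonelli to $|f_n(G)|\,\#{\rm Epi}(\mathscr{G},G)$ to get finiteness of the iterated integral from the $L^1$-ordering hypothesis, deduce almost-sure absolute convergence for each fixed $n$, intersect over the countably many $n$, and then invoke Fubini to interchange sum and integral for part (b). The only difference is that you make the measurability of $\mathscr{G}\mapsto\#{\rm Epi}(\mathscr{G},G)$ and the complex-valued case explicit, which the paper leaves implicit.
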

\begin{proof}
This follows from the Fubini-Tonelli theorem. Indeed, $f_n$ being $L^1$ means
\[
\sum_C \int_{\mathcal{P}} |f_n(G)| \#{\rm Epi}(\mathscr{G},G)\ d\mu(\mathscr{G}) < \infty.
\]
All probability spaces are $\sigma$-finite, so it follows from the Fubinit-Tonelli theorem that
\begin{align*}
\int_{\mathcal{P}} \left\lvert\sum_{G\in C/\cong} |f_n(G)|\#{\rm Epi}(\mathscr{G},G)\right\rvert\ d\mu(\mathscr{G}) &= \int_{\mathcal{P}} \sum_{G\in C/\cong} |f_n(G)|\#{\rm Epi}(\mathscr{G},G)\ d\mu(\mathscr{G})\\
&= \sum_{G\in C/\cong} \int_{\mathcal{P}} |f_n(G)|\#{\rm Epi}(\mathscr{G},G)\ d\mu(\mathscr{G})\\
&=\sum_{G\in C/\cong} |f_n(G)|M_G < \infty.
\end{align*}
Certainly $\mathbb{E}[|X|] < \infty$ implies $X<\infty$ almost surely, so it must be that for fixed $n$ the counting function
\[
N(\mathscr{G},f_n) = \sum_{G\in C/\cong} f_n(G)\#{\rm Epi}(\mathscr{G},G)
\]
converges absolutely for almost all $\mathscr{G}$. As there are countably many $n$, countable additivity implies $N(\mathscr{G},f_n)$ is well-defined almost surely as a function on the positive integers.

The fact that the integrals of $|f_n|$ are finite also implies
\[
\int_{\mathcal{P}} \sum_C f_n(G)\#{\rm Epi}(\mathscr{G},G)\ d\mu(\mathscr{G}) = \sum_C \int_{\mathcal{P}} f_n(G)\#{\rm Epi}(\mathscr{G},G)\ d\mu(\mathscr{G})
\]
by the Fubini-Tonelli Theorem. Evaluating the inner sum/integral gives
\[
\int_{\mathcal{P}} N(\mathscr{G},f_n)\ d\mu = \sum_C f_n(G)M_G = \int_C f_n\ dM \le \int_C |f_n|\ dM < \infty,
\]
proving (b).
\end{proof}

\section{Law of Large Numbers for categories}\label{sec:LLNstate}

We prove Theorem \ref{thm:LLNintro} and Corollary \ref{cor:LLNintrobound} in this section. The probabilistic content of each proof is essentially Chebyshev's bound, the Borel-Cantelli lemma, and a few tricks for nonegative orderings previously applied in \cite{korchevsky-petrov2010}. We prove Theorem \ref{thm:LLNintro}(i,ii) with explicit control on the rate of convergence, in part to make the proof of Corollary \ref{cor:LLNintrobound} more straight forward.

\subsection{The Weak Law of Large Numbers}

We prove the following result, which is Theorem \ref{thm:LLNintro}(i) together with an explicit rate of convergence.

\begin{theorem}\label{thm:WLLN}
Let $f_n:C/\cong\to \R$ be a real-valued $L^1$-ordering. Suppose there exists an integer $k$, a non-decreasing function $\gamma:\N\to \R^+$ for which $\lim_{t\to \infty} \gamma(t) = \infty$, and a function $E:\N\to \R^+$ for which
\begin{align}
\int_{\mathscr{G}}\left\lvert N(\mathscr{G},f_n) - \int_C f_n\ dM\right\rvert^{k}d\mu(\mathscr{G}) = O\left(\frac{E(n)^k}{\gamma(n)}\right).
\end{align}
Then
\[
\frac{N(\mathscr{G},f_n) - \displaystyle \int_C f_n\ dM}{E(n)} \overset{p.}{\longrightarrow} 0
\]
as $n\to\infty$, where the ``p." stands for converges in probability with respect to $\mu$.
\end{theorem}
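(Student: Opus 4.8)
The plan is to deduce convergence in probability directly from the hypothesized moment bound by the $k$-th moment form of Chebyshev's inequality (that is, Markov's inequality applied to the $k$-th power). First I would invoke Lemma~\ref{lem:nicecase} to fix a set of full $\mu$-measure on which $N(\mathscr{G},f_n)$ is defined for every $n$, and abbreviate $Y_n(\mathscr{G}) = N(\mathscr{G},f_n) - \int_C f_n\ dM$, a genuine real-valued random variable on that set. Fix $\epsilon > 0$; since $E$ and $\gamma$ take values in $\R^+$, all of $E(n), \gamma(n), \epsilon$ are positive, so every denominator below is legitimate. Because $t \mapsto t^k$ is increasing on $[0,\infty)$, the events $\{|Y_n| \ge \epsilon E(n)\}$ and $\{|Y_n|^k \ge (\epsilon E(n))^k\}$ coincide.

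Next I would apply Markov's inequality to the nonnegative random variable $|Y_n|^k$:
\[
\mu\!\left(\left\{\mathscr{G} : \left\lvert\frac{Y_n(\mathscr{G})}{E(n)}\right\rvert \ge \epsilon\right\}\right) \;\le\; \frac{1}{(\epsilon E(n))^k}\int_{\mathscr{G}} \bigl|Y_n(\mathscr{G})\bigr|^k\ d\mu(\mathscr{G}).
\]
By the standing hypothesis the integral on the right is $O\!\left(E(n)^k/\gamma(n)\right)$, so the right-hand side is $O\!\left(\epsilon^{-k}\gamma(n)^{-1}\right)$; the factors $E(n)^k$ cancel, which is exactly the reason that normalizing by $E(n)$ is the correct choice. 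Because $\gamma$ is non-decreasing with $\gamma(t) \to \infty$, this upper bound tends to $0$ as $n \to \infty$ for each fixed $\epsilon > 0$. Since $\epsilon$ was arbitrary, this is precisely the assertion that $Y_n/E(n) \to 0$ in probability with respect to $\mu$, which is the conclusion.

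The argument presents no serious obstacle: it is the standard ``method of moments'' estimate, and all of the content of the theorem has been pushed into the hypothesized moment bound, whose verification in categorical terms is the job of Theorems~\ref{thm:bounding_mixed_moments_2} and~\ref{thm:bounding_mixed_moments_2k}. The only points that merit a word of care are using Lemma~\ref{lem:nicecase} to ensure the $Y_n$ genuinely live on a set of full measure, and noting that $E(n)$ and $\gamma(n)$ are positive so that all denominators make sense — both immediate from the hypotheses. (The strong-law variants of Theorem~\ref{thm:LLNintro}(ii),(iii) will in addition need a first Borel--Cantelli argument and, for (iii), the monotonicity tricks of Korchevsky--Petrov, but those lie outside the present statement.)
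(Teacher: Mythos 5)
Your proposal is correct and follows essentially the same route as the paper: the paper also sets $Y_n = N(\mathscr{G},f_n) - \int_C f_n\ dM$, applies the $k$-th moment Chebyshev/Markov bound with threshold $\lambda = \epsilon E(n)$ so that the $E(n)^k$ factors cancel, and concludes from $\gamma(n)\to\infty$ that the probability tends to $0$ for each fixed $\epsilon$. Your explicit appeal to Lemma~\ref{lem:nicecase} for well-definedness is a minor extra care the paper leaves implicit; nothing of substance differs.
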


Theorem \ref{thm:LLNintro}(i) follows by taking $E(n) = \max\left\{\left\lvert\int_C f_n\ dM\right\rvert,\delta\right\}$ for some small $\delta > 0$.

\begin{proof}
If $Y$ is a random variable with $\mathbb{E}[Y] = 0$, then Chebyshev's bound states that for each positive integer $k$,
\[
{\rm Prob}\left(|Y| > \lambda\right) \le \lambda^{-k}\mathbb{E}[|Y|^{k}].
\]
(See, for instance, \cite{mitzenmacher-upfal2017}.) In the context of this proof, we let $\mathbb{E}$ denote the expected value with respect to $\mu(\mathscr{G})$ for the sake of convenience. Set
\[
Y_n = N(\mathscr{G},f_n) - \int_C f_n\ dM.
\]
It then follows that
\begin{align*}
\mathbb{E}[|Y_n|^{k}] &= \int_{\mathcal{P}} \left\lvert N(\mathscr{G},f_n) - \int_C f_n\ dM\right\rvert^{k}\ d\mu(\mathscr{G})\\
&=O\left(\frac{E(n)^k}{\gamma(n)}\right).
\end{align*}
Taking $\lambda = \epsilon E(n)$, it follows that
\begin{align*}
{\rm Prob}\left(|Y_n| > \epsilon E(n)\right) \ll \frac{1}{\epsilon^k \gamma(n)}
\end{align*}
for each $\epsilon > 0$. Thus,
\begin{align*}
\limsup_{n\to\infty} {\rm Prob}\left(\frac{|Y_n|}{E(n)} > \epsilon\right) = 0.
\end{align*}
This is the definition of $Y_n/E(n)$ converging in probability to $0$, and so concludes the proof.
\end{proof}

\subsection{The Strong Law of Large Numbers}

We prove the following result, which is Theorem \ref{thm:LLNintro}(ii) together with an explicit rate of convergence.

\begin{theorem}\label{thm:SLLN}
Let $f_n:C/\cong\to \R$ be a real-valued $L^1$-ordering. Suppose there exists an integer $k$, a non-decreasing function $\gamma:\N\to \R^+$ for which $\lim_{t\to \infty} \gamma(t) = \infty$, and a function $E:\N\to \R^+$ for which
\begin{align}
\int_{\mathscr{G}}\left\lvert N(\mathscr{G},f_n) - \int_C f_n\ dM\right\rvert^{k}d\mu(\mathscr{G}) = O\left(\frac{E(n)^k}{\gamma(n)}\right).
\end{align}
Additionally, assume that $\sum_{n=1}^{\infty} \frac{1}{\gamma(n)} < \infty$. Then
\[
\frac{N(\mathscr{G},f_n) - \displaystyle \int_C f_n\ dM}{E(n)} \overset{a.s.}{\longrightarrow} 0
\]
as $n\to\infty$, where the ``a.s." stands for converges almost surely with respect to $\mu$.
\end{theorem}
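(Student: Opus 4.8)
The plan is to run the same Chebyshev-plus-Borel--Cantelli argument used for Theorem \ref{thm:WLLN}, upgraded from convergence in probability to almost-sure convergence by exploiting the new summability hypothesis $\sum_{n}\gamma(n)^{-1}<\infty$. Throughout write $\mathbb{E}$ for the expectation with respect to $\mu$, and set
\[
Y_n = N(\mathscr{G},f_n) - \int_C f_n\ dM,
\]
which by Lemma \ref{lem:nicecase} is a measurable function of $\mathscr{G}$, defined for every $n$ on a set of full measure. The hypothesis says precisely that $\mathbb{E}[|Y_n|^{k}] = O(E(n)^k/\gamma(n))$.

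First I would fix $\epsilon>0$ and apply Chebyshev's bound with exponent $k$ and threshold $\lambda=\epsilon E(n)$, exactly as in the proof of Theorem \ref{thm:WLLN}, to get
\[
{\rm Prob}\!\left(\frac{|Y_n|}{E(n)}>\epsilon\right)\le \frac{\mathbb{E}[|Y_n|^{k}]}{\epsilon^{k}E(n)^{k}}\ll \frac{1}{\epsilon^{k}\,\gamma(n)}.
\]
Summing over $n$ and invoking $\sum_{n}\gamma(n)^{-1}<\infty$, the series $\sum_{n}{\rm Prob}(|Y_n|/E(n)>\epsilon)$ converges. The first Borel--Cantelli lemma then yields
\[
{\rm Prob}\!\left(\frac{|Y_n|}{E(n)}>\epsilon\ \text{for infinitely many }n\right)=0,
\]
so that $\limsup_{n\to\infty}|Y_n|/E(n)\le\epsilon$ almost surely.

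The only point requiring care is that the null set just produced depends on $\epsilon$, and there are uncountably many choices of $\epsilon$. To finish I would apply the previous step to the countable family $\epsilon=1/m$, $m\in\N$; the union of the corresponding null sets is still null by countable subadditivity, and on its complement — a set of full measure — we have $\limsup_{n}|Y_n|/E(n)\le 1/m$ for every $m$, hence $\limsup_{n}|Y_n|/E(n)=0$. This is exactly the statement that $(N(\mathscr{G},f_n)-\int_C f_n\ dM)/E(n)\to 0$ almost surely.

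I do not expect a genuine obstacle here: this is the textbook route from an $L^{k}$ moment bound with a summable gain to almost-sure convergence, and it is purely probabilistic. The bookkeeping to watch is measurability of the events (immediate from Lemma \ref{lem:nicecase}) and the passage from ``for each $\epsilon$'' to ``almost surely'' via a countable exhaustion rather than an uncountable intersection. The sole new ingredient over Theorem \ref{thm:WLLN} is the summability of $\sum_{n}\gamma(n)^{-1}$, which is precisely what converts ``$\limsup$ of the probabilities is $0$'' into ``the probability of the $\limsup$ event is $0$''.
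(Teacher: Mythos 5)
Your proposal is correct and follows exactly the paper's own argument: Chebyshev's bound with $\lambda=\epsilon E(n)$, summability of $\sum_n \gamma(n)^{-1}$, the first Borel--Cantelli lemma, and the countable exhaustion $\epsilon=1/m$ to pass to almost-sure convergence. No differences worth noting.
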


Theorem \ref{thm:LLNintro}(ii) follows by taking $E(n) = \max\left\{\left\lvert\int_C f_n\ dM\right\rvert,\delta\right\}$ for some small $\delta > 0$.

\begin{proof}
We proceed in a similar way to the proof of Theorem \ref{thm:WLLN}. Set
\[
Y_n = N(\mathscr{G},f_n) - \int_C f_n\ dM
\]
so that
\begin{align*}
\mathbb{E}[|Y_n|^{k}] &= \int_{\mathcal{P}} \left\lvert N(\mathscr{G},f_n) - \int_C f_n\ dM\right\rvert^{k}\ d\mu(\mathscr{G})\\
&=O\left(\frac{E(n)^k}{\gamma(n)}\right).
\end{align*}
It then follows from Chebyshev's bound that
\[
{\rm Prob}\left(|Y_n| > \lambda\right) \ll \frac{E(n)^k}{\lambda^k\gamma(n)}.
\]
Taking $\lambda = \epsilon E(n)$, it follows that
\begin{align*}
\sum_{n=1}^{\infty}{\rm Prob}\left(|Y_n| > \epsilon E(n)\right) \ll \sum_{n=1}^{\infty}\frac{1}{\epsilon^k \gamma(n)} < \infty.
\end{align*}
for each $\epsilon > 0$. Thus, the Borel-Cantelli lemma implies
\[
{\rm Prob}\left(|Y_n| > \epsilon E(n)\text{ infinitely often }\right) = 0,
\]
so that
\begin{align*}
{\rm Prob}\left(\limsup_{n\to\infty} \frac{|Y_n|}{E(n)} > \epsilon\right) = 0.
\end{align*}
By countable additivity and taking $\epsilon = 1/m$ for $m\in \Z$ tending towards $\infty$, this implies $Y_n/E(n)$ converges to $0$ almost surely, concluding the proof.
\end{proof}

\subsection{Almost Sure Upper Bounds}

Corollary \ref{cor:LLNintrobound} follows almost immediately from Theorem \ref{thm:SLLN}.

\begin{proof}[Proof of Corollary \ref{cor:LLNintrobound}]
We bound
\begin{align*}
&\int_{\mathscr{G}}\left\lvert N(\mathscr{G},f_n) - \int_C f_n\ dM\right\rvert^{k}d\mu(\mathscr{G})\\
&\le \int_{\mathscr{G}}\left(|N(\mathscr{G},f_n)| + \int_C |f_n|\ dM\right)^{k}d\mu(\mathscr{G})\\
&\le \int_{\mathscr{G}}\left(\sum_{G\in C/\cong} |f_n(G)|(\#{\rm Epi}(\mathscr{G},G) + M_G)\right)^{k}d\mu(\mathscr{G})\\
&\le\sum_{(G_1,...,G_k)\in C^k/\cong} |f_n(G_1,...,G_k)| \sum_{j=0}^{2k} \sum_{\sigma\in S_k} M_{(G_{\sigma(1)},...,G_{\sigma(j)})}^{(j)} M_{G_{\sigma(j+1)}}\cdots M_{G_{\sigma(k)}},
\end{align*}
where the last line follows from $f_n$ being an $L^1$ function and moving the integral all the way to the inside. We remark that the sum over permutations $\sigma\in S_k$ is a bit larger than the truth, but this bound will be sufficient for our purposes. This is exactly a sum of mixed moments
\begin{align*}
&\le \sum_{j=0}^{2k} \sum_{\sigma\in S_k} \int_{\sigma(C^{k})} |f_n|\ dM^{(j)}(dM)^{k-j}\\
&\ll_k \max_{j\in \{0,1,...,k\}}\left\{\int_{C^{k}} |f_n|\ dM^{(j)}(dM)^{k-j}\right\},
\end{align*}
noting that $f_n$ is invariant under the permuting the coordinates. Taking
\[
E(n) = n^{\frac{1+\epsilon}{k}}\max_{j\in \{0,1,...,k\}}\left\{\int_{C^{k}} |f_n|\ dM^{(j)}(dM)^{k-j}\right\}^{1/k}
\]
and $\gamma(n) = n^{1+\epsilon}$ in Theorem \ref{thm:SLLN} is sufficient to conclude the proof.
\end{proof}

\subsection{The Strong Law of Large Numbers for nonnegative counting functions}

Theorem \ref{thm:LLNintro}(iii) takes advantage of some tricks employed by \cite{korchevsky-petrov2010} for nonegative functions. The following proof is in essense the same as their main result, however the functions $f_n(G)$ we allow are slightly more general than the sequence of weights $w_k$ utilized in \cite[Theorem 1]{korchevsky-petrov2010}.

These tricks are not compatible with controlling the rate of convergence, so we do not state a more general result for this part. It is certainly possible that more can be proven for this case but we do not do so here.

\begin{proof}[Proof of Theorem \ref{thm:LLNintro}(iii)]
We remark that $n\mapsto N(\mathscr{G},f_n)$ being nondecreasing almost everywhere implies
\[
n\mapsto \int_C f_n\ dM = \int_{\mathcal{P}} N(\mathscr{G},f_n)\ d\mu(\mathscr{G})
\]
is also nondecreasing.

The primary trick is to prove convergence along a subsequence of indices first, then interpolate to the remaining indices.

Fix $b>1$. Define the subsequence of natrual numbers $(n_i)$ by
\[
n_i = \inf\left\{ n : \int_C f_n\ dM \ge b^i\right\}.
\]
These necessarily exist by $\int_C f_n dM \to \infty$ with $n$. We apply Chebyshev's inequality to
\[
Y_n = N(\mathscr{G},f_n) - \int_C f_n\ dM
\]
with $\lambda = \epsilon \int_C f_n\ dM$ to prove that
\begin{align*}
\sum_{i=1}^{\infty} {\rm Prob}\left(|Y_{n_i}| > \epsilon\int_C f_{n_i}\ dM\right) &\ll \sum_{i=1}^{\infty} \frac{1}{\epsilon^k \psi\left(\int_C f_n dM\right)}\\
&\le \sum_{i=1}^{\infty} \frac{1}{\epsilon^k \psi\left(b^i\right)},
\end{align*}
where the second comparison follows from $\psi$ being nondecreasing. A straight foward exercise in the convergence of infinite series shows that if $\sum_{n=1}^{\infty} \frac{1}{n\psi(n)}$ converges, then so does $\sum_{i=1}^{\infty} \frac{1}{\psi(b^i)}$ for any $b>1$ (for a reference, see \cite[Lemma 1]{petrov2009a}). Similar to the proof of Theorem \ref{thm:SLLN}, the Borel-Cantelli Lemma then implies
\[
{\rm Prob}\left(\lim_{i\to \infty} \frac{|Y_{n_i}|}{\displaystyle \int_C f_{n_i}\ dM} = 0\right) = 1.
\]

Next, we interpolate to other indices $n$ not belonging to the subsequence $(n_i)$. There exists an $i$ for which $n_i \le n \le n_{i+1}$. We expand
\begin{align*}
\frac{N(\mathscr{G},f_n) - \int_C f_n dM}{\int_C f_n dM} &= \frac{N(\mathscr{G},f_n) - \int_C f_{n_{i+1}} dM}{\int_C f_n dM} + \frac{\int_C f_{n_{i+1}} dM - \int_C f_{n} dM}{\int_C f_n dM}.
\end{align*}
Given that the counting function and its moments are nondecreasing almost everywhere, this produces an almost everywhere upper bound
\begin{align*}
\le \frac{\left\lvert N(\mathscr{G},f_{n_{i+1}}) - \int_C f_{n_{i+1}} dM\right\rvert}{\int_C f_{n_{i+1}} dM}\frac{\int_C f_{n_{i+1}} dM}{\int_C f_{n_{i}} dM} + \frac{\int_C f_{n_{i+1}} dM - \int_C f_{n_{i}} dM}{\int_C f_{n_i} dM}.
\end{align*}
The subsequence $(n_i)$ is nondecreasing, and if $n_i < n_{i+1}$ it follows that
\[
b^i\le \int_C f_{n_{i}} dM \le \int_C f_{n_{i+1}-1} dM < b^{i+1} \le \int_C f_{n_{i+1}} dM.
\]
We can simplify the upper bound to
\begin{align*}
\le \frac{\left\lvert N(\mathscr{G},f_{n_{i+1}}) - \int_C f_{n_{i+1}} dM\right\rvert}{\int_C f_{n_{i+1}} dM}b^2 + (b^2 - 1).
\end{align*}
Taking the limit as $i\to \infty$ implies
\[
\limsup_{n\to \infty} \frac{N(\mathscr{G},f_n) - \int_C f_n dM}{\int_C f_n dM}\le b^2 - 1.
\]
Taking $b>1$ sufficiently close to $1$ gives the desired $\limsup$. The $\liminf$ is calculated similarly using the lower bound
\begin{align*}
\frac{N(\mathscr{G},f_n) - \int_C f_n dM}{\int_C f_n dM} &\ge \frac{N(\mathscr{G},f_{n_i}) - \int_C f_{n_{i}} dM}{\int_C f_{n_i} dM}\frac{\int_C f_{n_i} dM}{\int_C f_{n_{i+1}} dM} - \frac{\int_C f_{n} dM - \int_C f_{n_{i}} dM}{\int_C f_n dM}\\
&\ge \frac{N(\mathscr{G},f_{n_i}) - \int_C f_{n_{i}} dM}{\int_C f_{n_i} dM}b^{-2} - 1 + \frac{\int_C f_{n_{i}} dM}{\int_C f_n dM}\\
&\ge \frac{N(\mathscr{G},f_{n_i}) - \int_C f_{n_{i}} dM}{\int_C f_{n_i} dM}b^{-2} - 1 + b^{-2}.
\end{align*}
The limit as $i\to \infty$ can again be made arbitrarily close to $0$ by taking $b$ close to $1$.
\end{proof}

\section{Epi-Products}\label{sec:epi-prod}

We let $C$ be a diamond category as defined in \cite[Definition 1.3]{sawin-wood2022} with $\mathcal{P}$ the corresponding category of pro-objects. We fix throughout a probability measure $\mu$ on $\mathcal{P}$ with finite moments $M_G$.

Theorem \ref{thm:bounding_mixed_moments_2} and Theorem \ref{thm:bounding_mixed_moments_2k} rely on the concept of an epi-product, which is a categorical construction capturing the lack of correlation between $\#{\rm Epi}(\mathscr{G},G)$ and $\#{\rm Epi}(\mathscr{G},H)$ as $\mathscr{G}$ varies according to $\mu$.

\subsection{The Definition of an Epi-Product}

\begin{definition}\label{def:epiproduct}
We define the \textbf{epi-product} of $G_1$ and $G_2$ to be an object $G_1\times_{\rm Epi} G_2$ with epimorphisms to $G_1$ and $G_2$ which satisfies the universal property
\[
\begin{tikzcd}
H \rar[two heads]\dar[two heads]\drar[two heads, dashed] & G_2 \\
G_1& G_1\times_{\rm Epi} G_2 \lar[two heads]\uar[two heads]
\end{tikzcd}
\]
where all morphisms in the diagram (including the universal morphism) are required to be epimorphisms.
\end{definition}
This is similar to the definition of a product, except we require all the morphisms (including the universal morphism) to be epimorphisms. Notice that we do not ask that $C$ has epi-products in general. This flexibility will allow us to choose a wider variety of categories to work over. We remark that if both the usual product $G_1\times G_2$ and $G_1\times_{\rm Epi} G_2$ exist then there is necessarily a unique isomorphism between them. However, the existence of one does not imply the existence of the other. We will discuss some basic examples at the end of the paper.

\begin{lemma}\label{lem:uncorrelated}
Let $\mu$ be a probability measure on the pro-objects of $C$ with finite moments $M_G$, and let $\mathscr{G}$ vary with respect to $\mu$. Then the random variables $\#{\rm Epi}(\mathscr{G},G)$ and $\#{\rm Epi}(\mathscr{G},H)$ for $G,H\in C/\cong$ are uncorrelated if $G\times_{\rm Epi} H$ exists and $M_{G\times_{\rm Epi} H} = M_G M_H$.
\end{lemma}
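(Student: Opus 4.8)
The plan is to show that the expected value of the product $\#{\rm Epi}(\mathscr{G},G)\cdot\#{\rm Epi}(\mathscr{G},H)$ equals the product of expectations $M_G M_H$, which is precisely the statement that the two random variables are uncorrelated (their covariance vanishes). The key observation is that a pair of epimorphisms $(\varphi,\psi)$ with $\varphi:\mathscr{G}\twoheadrightarrow G$ and $\psi:\mathscr{G}\twoheadrightarrow H$ should be governed by the universal property of the epi-product $G\times_{\rm Epi} H$.

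First I would make precise the counting identity: for a fixed pro-object $\mathscr{G}$, I claim
\[
\#{\rm Epi}(\mathscr{G},G)\cdot\#{\rm Epi}(\mathscr{G},H) = \#\{(\varphi,\psi) : \varphi\in{\rm Epi}(\mathscr{G},G),\ \psi\in{\rm Epi}(\mathscr{G},H)\}.
\]
This is just the definition of the product of two cardinalities. Next, given such a pair $(\varphi,\psi)$, the morphism $(\varphi,\psi):\mathscr{G}\to G\times_{\rm Epi} H$ need not a priori be an epimorphism, but its image is a subobject of $G\times_{\rm Epi} H$; I would argue instead that the universal property of the epi-product lets me factor through it. More carefully: since $\varphi$ and $\psi$ are both epimorphisms, the cone $(\mathscr{G},\varphi,\psi)$ over the diagram $G\leftarrow\ \to H$ consists entirely of epimorphisms, so by the universal property of $G\times_{\rm Epi} H$ there is a unique epimorphism $\rho:\mathscr{G}\twoheadrightarrow G\times_{\rm Epi} H$ with $\varphi = \pi_1\circ\rho$ and $\psi = \pi_2\circ\rho$, where $\pi_1,\pi_2$ are the structure maps. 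Conversely, composing any epimorphism $\mathscr{G}\twoheadrightarrow G\times_{\rm Epi} H$ with $\pi_1$ and $\pi_2$ produces such a pair. This should give a bijection
\[
\{(\varphi,\psi)\} \;\longleftrightarrow\; {\rm Epi}(\mathscr{G}, G\times_{\rm Epi} H),
\]
hence $\#{\rm Epi}(\mathscr{G},G)\cdot\#{\rm Epi}(\mathscr{G},H) = \#{\rm Epi}(\mathscr{G},G\times_{\rm Epi} H)$ pointwise in $\mathscr{G}$. Integrating against $\mu$ and using the definition of moments gives
\[
\mathbb{E}\big[\#{\rm Epi}(\mathscr{G},G)\#{\rm Epi}(\mathscr{G},H)\big] = M_{G\times_{\rm Epi} H} = M_G M_H = \mathbb{E}[\#{\rm Epi}(\mathscr{G},G)]\,\mathbb{E}[\#{\rm Epi}(\mathscr{G},H)],
\]
which is the desired lack of correlation.

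The main obstacle I anticipate is verifying the claimed bijection carefully, in particular checking that the composite $\pi_i\circ\rho$ of an epimorphism with the (epimorphic) structure map is again an epimorphism — this is automatic since epimorphisms are closed under composition — and, more delicately, that $\rho$ itself is forced to be an epimorphism and is \emph{unique}. Uniqueness is exactly the uniqueness clause in the universal property of Definition \ref{def:epiproduct}; the fact that $\rho$ lies in ${\rm Epi}$ rather than merely ${\rm Hom}$ is built into the definition (the dashed universal arrow is required to be an epimorphism), so the two directions of the correspondence do land in the right sets. A secondary point worth a sentence is that all quantities are finite (so that manipulating the product of cardinalities and passing expectations through is legitimate): $M_G, M_H < \infty$ by hypothesis that $\mu$ has finite moments, and $M_{G\times_{\rm Epi} H} = M_G M_H < \infty$, so Fubini--Tonelli applies to justify interchanging the integral with the (finite) product.
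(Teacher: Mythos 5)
Your proposal is correct and follows the same route as the paper: both reduce the claim to the pointwise identity $\#{\rm Epi}(\mathscr{G},G)\,\#{\rm Epi}(\mathscr{G},H) = \#{\rm Epi}(\mathscr{G},G\times_{\rm Epi}H)$ via the universal property of the epi-product and then integrate against $\mu$. You simply spell out the bijection and the finiteness justification in more detail than the paper does.
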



\begin{proof}
By the definition of uncorrelated, it suffices to prove that
\[
\mathbb{E}[\#{\rm Epi}(\mathscr{G},G)\#{\rm Epi}(\mathscr{G},H)]=\mathbb{E}[\#{\rm Epi}(\mathscr{G},G)]\mathbb{E}[\#{\rm Epi}(\mathscr{G},H)].
\]
The universal property of epi-products implies that
\[
\#{\rm Epi}(\mathscr{G},G)\#{\rm Epi}(\mathscr{G},H) = \#{\rm Epi}(\mathscr{G},G\times_{\rm Epi} H).
\]
Therefore
\begin{align*}
\mathbb{E}[\#{\rm Epi}(\mathscr{G},G)\#{\rm Epi}(\mathscr{G},H)] &= \mathbb{E}[\#{\rm Epi}(\mathscr{G},G\times_{\rm Epi} H)]\\
&= M_{G\times_{\rm Epi} H}\\
&=M_GM_H\\
&=\mathbb{E}[\#{\rm Epi}(\mathscr{G},G)]\mathbb{E}[\#{\rm Epi}(\mathscr{G},H)].
\end{align*}
\end{proof}

\subsection{The Proof of Theorem \ref{thm:bounding_mixed_moments_2}}

Theorem \ref{thm:bounding_mixed_moments_2} follows almost immediately from Lemma \ref{lem:uncorrelated}.

\begin{proof}[Proof of Theorem \ref{thm:bounding_mixed_moments_2}]
For simplicity in this proof, we write $\mathbb{E}$ for the expected value with respect to $\mu$. When $f_n$ is real-valued, the square is always positive and we can ignore the absolute value. Thus
\begin{align*}
\int_{\mathscr{G}}\left( N(\mathscr{G},f_n) - \int_C f_n\ dM\right)^{2}d\mu(\mathscr{G})
&=\mathbb{E}\left[\left(\sum_{G\in C/\cong} f_n(G)(\#{\rm Epi}(\mathscr{G},G) - M_G)\right)^{2}\right]\\
&=\sum_{(G_1,G_2)\in C^{2}/\cong} f_n(G_1,G_2) \mathbb{E}\left[\prod_{i=1}^{2}\left(\#{\rm Epi}(\mathscr{G},G_i) - M_{G_i}\right)\right]\\
&=\sum_{(G_1,G_2)\in C^{2}/\cong} f_n(G_1,G_2) \left(M^{(2)}_{(G_1,G_2)} - M_{G_1}M_{G_2}\right).
\end{align*}
By Lemma \ref{lem:uncorrelated}, $(G_1,G_2)\in E(2,M)$ implies $\#{\rm Epi}(\mathscr{G},G_1)$ and $\#{\rm Epi}(\mathscr{G},G_2)$ are uncorrelated. By definition, this is equivalent to $M^{(2)}_{(G_1,G_2)} = M_{G_1}M_{G_2}$. Therefore the sum simplifies to
\begin{align*}
&=\sum_{(G_1,G_2)\in C^{2}\setminus E(2,M)/\cong} f_n(G_1,G_2) \left(M^{(2)}_{(G_1,G_2)} - M_{G_1}M_{G_2}\right)\\
&= \int_{C^2\setminus E(2,M)} f_n\ dM^{(2)} - \int_{C^2\setminus E(2,M)} f_n\ (dM)^2.
\end{align*}
The result then follows from the triangle inequality.
\end{proof}

\subsection{The Proof of Theorem \ref{thm:bounding_mixed_moments_2k}}

Theorem \ref{thm:bounding_mixed_moments_2k} is proven similarly to Theorem \ref{thm:bounding_mixed_moments_2}, although the higher power is trickier to keep track of. In particular, some notion of ``mutually uncorrelated'' random variables is needed to compare the moments of three or more of the $\#{\rm Epi}(\mathscr{G},G_i)$ terms.

The extra categorical requirements in Theorem \ref{thm:bounding_mixed_moments_2k} are used to reframe the mixed moments as first moments, which allows us to more easily capture the required ``mutually uncorrelated''ness using subobjects of product objects. This reframing is also useful for computing the mixed moments, so we state and prove it separately:

\begin{lemma}\label{lem:MG1Gk}
Suppose $C$ is a category with for which every morphism factors uniquely (up to isomorphism) as a composition of an epimorphism with a monomorphism. Let $G_1,...,G_j\in C$ be objects for which the product $G_1\times \cdots \times G_j$ exists in $C$ and has finitely many subobjects up to isomorphism. Then
\[
M^{(j)}_{(G_1,...,G_j)} = \sum_{\substack{\iota:H\hookrightarrow \prod_i G_i\\ \rho_m\iota\text{ is an epimorphism}}} M_H,
\]
where the sum is over subobjects $\iota:H\hookrightarrow \prod_i G_i$ up to isomorphism for which $\rho_m \iota$ is also an epimorphism for each projection morphism $\rho_m:\prod_i G_i \to G_m$.
\end{lemma}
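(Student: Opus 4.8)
The plan is to compute the mixed moment $M^{(j)}_{(G_1,\dots,G_j)} = \int_{\mathcal P} \prod_{i=1}^j \#\mathrm{Epi}(\mathscr G, G_i)\, d\mu(\mathscr G)$ by first rewriting the integrand $\prod_{i=1}^j \#\mathrm{Epi}(\mathscr G, G_i)$ as a sum of terms of the form $\#\mathrm{Epi}(\mathscr G, H)$ indexed by certain subobjects $H$ of $P := \prod_i G_i$, and then integrating term-by-term against $\mu$ (which is legitimate since the sum is finite). So the lemma reduces to a purely categorical identity: for every pro-object (indeed every object) $\mathscr G$,
\[
\prod_{i=1}^j \#\mathrm{Hom}_{\mathrm{Epi}}(\mathscr G, G_i) = \sum_{\substack{\iota: H\hookrightarrow P \\ \rho_m\iota \text{ epi } \forall m}} \#\mathrm{Epi}(\mathscr G, H),
\]
where the sum runs over iso-classes of subobjects of $P$ whose composite with each projection $\rho_m: P\to G_m$ is an epimorphism.

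The key step is to produce a bijection. Given a tuple $(\varphi_1,\dots,\varphi_j)$ of epimorphisms $\varphi_i:\mathscr G\to G_i$, the universal property of the product yields a unique morphism $\varphi = (\varphi_1,\dots,\varphi_j):\mathscr G\to P$ with $\rho_m\varphi = \varphi_m$. Now apply the unique epi-mono factorization hypothesis to $\varphi$: write $\varphi = \iota\circ\pi$ with $\pi:\mathscr G\twoheadrightarrow H$ an epimorphism and $\iota:H\hookrightarrow P$ a monomorphism; this $H$ is a subobject of $P$, well-defined up to isomorphism. Since $\rho_m\varphi = \varphi_m$ is an epimorphism and $\varphi_m = (\rho_m\iota)\circ\pi$ factors through $\rho_m\iota$, and since a morphism that has an epimorphism as a right factor — wait, more carefully: $\rho_m\iota\circ\pi$ epi forces $\rho_m\iota$ epi (if $g\circ f$ is epi then $g$ is epi, a standard categorical fact). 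Conversely, given a subobject $\iota:H\hookrightarrow P$ with all $\rho_m\iota$ epi together with an epimorphism $\pi:\mathscr G\twoheadrightarrow H$, set $\varphi_m = \rho_m\iota\pi$; this is a composite of two epimorphisms hence an epimorphism, giving a tuple in the left-hand set. One checks these two constructions are mutually inverse up to the chosen identifications: the point is precisely that the epi-mono factorization is unique up to (unique) isomorphism, so the $H$ and $\pi$ attached to a tuple are canonical, and reconstructing $\varphi$ from $(H,\pi)$ via $\iota$ recovers the original morphism by the uniqueness clause of the product's universal property. Summing $\#\mathrm{Epi}(\mathscr G, H)$ over subobjects $H$ thus counts exactly the tuples.

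The main obstacle — really the only subtle point — is bookkeeping the phrase "up to isomorphism" consistently: subobjects of $P$ are equivalence classes of monomorphisms, and I must check that the correspondence between tuples $(\varphi_i)$ and pairs $(H,\pi)$ descends correctly to iso-classes, i.e. that replacing $\iota$ by an equivalent mono $\iota' = \iota\circ\theta$ ($\theta$ an isomorphism) just reshuffles the set $\mathrm{Epi}(\mathscr G, H)$ bijectively and does not change the count. This is where the finiteness-of-subobjects hypothesis is used: it guarantees the sum on the right is finite, so that interchanging $\sum$ and $\int_{\mathcal P}$ is trivially valid and no convergence issue arises. Once the categorical identity is in hand, integrating both sides against $\mu$ and using $\int_{\mathcal P}\#\mathrm{Epi}(\mathscr G, H)\,d\mu = M_H$ gives the claimed formula; finiteness of each $M_H$ follows since $\mu$ has finite moments.
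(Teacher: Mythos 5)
Your proposal is correct and follows essentially the same route as the paper: the paper's proof likewise embeds $\prod_i \mathrm{Epi}(\mathscr{G},G_i)$ into $\mathrm{Hom}(\mathscr{G},\prod_i G_i)$ via the universal property, partitions by image (i.e.\ by the epi--mono factorization) to obtain the disjoint union over subobjects $H$ with all $\rho_m\iota$ epic, and then takes cardinalities and integrates. You merely spell out the details of the factorization and the inverse construction that the paper leaves implicit.
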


\begin{proof}
By the universal property of the product, there is an embedding
\[
\prod_i {\rm Epi}(\mathscr{G},G_i) \hookrightarrow \Hom\left(\mathscr{G},\prod_i G_i\right).
\]
Partitioning this embedding based on the possible images gives a bijection to a disjoint union
\[
\prod_i {\rm Epi}(\mathscr{G},G_i) \longleftrightarrow \coprod_{\substack{\iota:H\hookrightarrow \prod_i G_i\\\rho_m\iota\text{ is an epimorphism}}}{\rm Epi}(\mathscr{G},H).
\]
Taking cardinalities and integrating with respect to $\mu$ concludes the proof.
\end{proof}

We can now prove Theorem \ref{thm:bounding_mixed_moments_2k}.

\begin{proof}[Proof of Theorem \ref{thm:bounding_mixed_moments_2k}]
For simplicity in this proof, we write $\mathbb{E}$ for the expected value with respect to $\mu$. When $f_n$ is real-valued and we take the $2k^{\rm th}$ moment, we can essentially ignore the absolute values. We then compute
\begin{align*}
&\int_{\mathscr{G}}\left\lvert N(\mathscr{G},f_n) - \int_C f_n\ dM\right\rvert^{2k}d\mu(\mathscr{G})\\
&= \mathbb{E}\left[\left(\sum_{G\in C/\cong} f_n(G)(\#{\rm Epi}(\mathscr{G},G) - M_G)\right)^{2k}\right]\\
&=\sum_{(G_1,...,G_{2k})\in C^{2k}/\cong} f_n(G_1,...,G_k) \mathbb{E}\left[\prod_{i=1}^{2k}\left(\#{\rm Epi}(\mathscr{G},G_i) - M_{G_i}\right)\right],
\end{align*}
where we can switch the order of the integral defining $\mathbb{E}$ and the countable sum because $f_n$ is an $L^1$-ordering.

We multiply out the product of random variables to produce
\begin{align*}
\mathbb{E}\left[\prod_{i=1}^{2k}\left(\#{\rm Epi}(\mathscr{G},G_i) - M_{G_i}\right)\right] &=\sum_{A\subseteq\{1,2,...,k\}} (-1)^{|A|} M^{(|A|)}_{(G_m)_{m\in A}} \prod_{m\not\in A} M_{G_m}.
\end{align*}
It suffices to show this quantity is $0$ when $(G_1,...,G_{2k})\in E(2k,M)$, as the fact that $f_n$, $M^{(j)}$, and $E(2k,M)$ are invariant under permutation implies
\begin{align*}
&\sum_{(G_1,...,G_k)\in C^{2k}\setminus E(2k,M)/\cong} f_n(G_1,...,G_{2k})\sum_{A\subseteq\{1,2,...,k\}} (-1)^{|A|} M^{(|A|)}_{(G_m)_{m\in A}} \prod_{m\not\in A} M_{G_m}\\
&\ll_k \max_{j\in \{1,2,...,2k\}} \sum_{\substack{A\subseteq\{1,2,...,k\}\\|A|=j}} \sum_{(G_1,...,G_k)\in C^{2k}\setminus E(2k,M)/\cong} |f_n(G_1,...,G_{2k})| M^{(j)}_{(G_m)_{m\in A}} \prod_{m\not\in A} M_{G_m}\\
&\ll_k \max_{j\in \{1,...,2k\}} \int_{C^{2k}\setminus E(2k,M)} |f_n|\ dM^{(j)}(dM)^{2k-j}.
\end{align*}

Now, suppose that $(G_1,...,G_{2k})\in E(2k,M)$, with distinguished index $i$. We separate the $i^{\rm th}$ term of the product out, and compute
\begin{align*}
&\mathbb{E}\left[\prod_{m=1}^{2k}\left(\#{\rm Epi}(\mathscr{G},G_m) - M_{G_m}\right)\right]\\
&=\mathbb{E}\left[\#{\rm Epi}(\mathscr{G},G_i)\prod_{m\ne i}\left(\#{\rm Epi}(\mathscr{G},G_m) - M_{G_m}\right)\right] - M_{G_i}\mathbb{E}\left[\prod_{m\ne i}\left(\#{\rm Epi}(\mathscr{G},G_m) - M_{G_m}\right)\right]\\
&=\sum_{\substack{A\subset\{1,...,2k\}\\i\not\in A}}\sum_{\substack{\iota:H\hookrightarrow G_A\\\rho_m\iota\text{ is an epi.}}} \left(\mathbb{E}\left[\#{\rm Epi}(\mathscr{G},G_i)\#{\rm Epi}(\mathscr{G},H)\right]- M_{G_i} M_H \right)\prod_{m\not\in A} M_{G_m}.
\end{align*}
By Lemma \ref{lem:uncorrelated} and the properties of the distinguished $i^{\rm th}$ index, it is necessarily the case that
\[
\mathbb{E}\left[\#{\rm Epi}(\mathscr{G},G_i)\#{\rm Epi}(\mathscr{G},H)\right] = M_{G_i} M_H
\]
for each $H$. Therefore this is a sum of zeros, and cancels out as claimed.
\end{proof}

\section{Examples}\label{sec:examples}

Determining the bound in Theorem \ref{thm:bounding_mixed_moments_2} and Theorem \ref{thm:bounding_mixed_moments_2k} is fairly specific to the category, and the motivating examples of number field counting and the Batyrev-Manin conjecture take a fair bit of set up to define the corresponding diamond category $C$, construct a measure modeling arithmetic behavior, translate classical orderings into a sequence $f_n$, and compute the moments $\int f_n\ dM$. All this before even considering the question of bounding the higher moments. The plus side is the immense return value of Theorem \ref{thm:LLNintro} and Corollary \ref{cor:LLNintrobound}. If you understand when epi-products exist enough to apply these results for one ordering, then it is often the case that the same reasoning will apply to numerous other orderings on the same category.

The author will construct a model for Malle's counting function and determine the reasonableness of a large collection of orderings in forthcoming work \cite{alberts2023}. For the purposes of this paper, we include some more basic examples where these steps are either easier or already done for us in works like \cite{sawin-wood2022}.

\subsection{Random subsets with independent elements}

We prove a typical tool used to give random models for sets of integers as a special case of our methods:
\begin{theorem}
Let $\mathscr{G}$ be a random subset of $\N$ where we let the events $(n\in \mathscr{G})$ be pairwise independent with probability $r_n\in [0,1]$. This forms a true probability measure on $2^{\N}$, and for any any $\epsilon>0$
\[
\frac{\displaystyle \#(\mathscr{G}\cap\{1,...,n\}) - \sum_{j\le n}r_j }{\displaystyle n^{\epsilon}\sqrt{\sum_{j\le n} r_j}} \overset{a.s.}{\longrightarrow} 0
\]
as $n\to \infty$.
\end{theorem}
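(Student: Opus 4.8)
The plan is to realize the statement as an instance of Theorem~\ref{thm:bounding_mixed_moments_2} followed by the monotone strong law. Take $C$ to be the category whose objects are the finite subsets of $\N$, with a unique morphism $S\to T$ exactly when $T\subseteq S$; one checks this is a diamond category with countably many isomorphism classes in which every morphism is both an epimorphism and a monomorphism, and whose category of pro-objects is identified with $2^{\N}$, a subset $\mathscr{G}\subseteq\N$ corresponding to the pro-object with $\#{\rm Epi}(\mathscr{G},S)$ equal to $1$ if $S\subseteq\mathscr{G}$ and $0$ otherwise. The pairwise-independent random subset with marginals $r_n$ has finite moments $M_S={\rm Prob}(S\subseteq\mathscr{G})$, and pairwise independence pins down $M_{\{j\}}=r_j$ and $M_{\{j,k\}}=r_jr_k$ for $j\neq k$; any consistent completion of this to a full moment sequence (for instance the one given by the product measure, or by \cite[Theorem 1.7]{sawin-wood2022}) produces a genuine probability measure $\mu$ on $2^{\N}$, and since the argument below uses only first and second moments the choice is irrelevant. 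As ordering take $f_n$ to be the indicator of the finite set $\{\{1\},\dots,\{n\}\}\subseteq C/\cong$; then $f_n$ is an $L^1$-ordering, $N(\mathscr{G},f_n)=\sum_{j\le n}\#{\rm Epi}(\mathscr{G},\{j\})=\#(\mathscr{G}\cap\{1,\dots,n\})$, and $\int_C f_n\,dM=\sum_{j\le n}r_j=:S_n$, the main term in the statement.

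The crucial point is the second moment, which Theorem~\ref{thm:bounding_mixed_moments_2} computes purely from the categorical structure. In $C$ the epi-product of $\{j\}$ and $\{k\}$ is the union $\{j\}\cup\{k\}$, so condition (b) defining $E(2,M)$ reads $M_{\{j\}\cup\{k\}}=M_{\{j\}}M_{\{k\}}$. For $j\neq k$ this is the identity $r_jr_k=r_jr_k$ supplied by pairwise independence, so every off-diagonal pair of singletons lies in $E(2,M)$; for $j=k$ it reads $r_j=r_j^2$, which fails unless $r_j\in\{0,1\}$, so among the pairs $(\{j\},\{k\})$ with $j,k\le n$ only the diagonal survives in $C^2\setminus E(2,M)$. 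Since $M^{(2)}_{(\{j\},\{j\})}=\mathbb{E}[\#{\rm Epi}(\mathscr{G},\{j\})^2]=r_j$, Theorem~\ref{thm:bounding_mixed_moments_2} yields
\[
\int_{\mathcal{P}}\bigl(N(\mathscr{G},f_n)-S_n\bigr)^2\,d\mu\;\ll\;\max\Bigl\{\textstyle\sum_{j\le n}r_j,\ \sum_{j\le n}r_j^2\Bigr\}\;=\;\sum_{j\le n}r_j\;=\;S_n .
\]

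It remains to promote this to the almost sure statement with denominator $n^{\epsilon}\sqrt{S_n}$, and this is where the real work lies. If $\sum_j r_j<\infty$ then $\mathscr{G}$ is a.s. finite by Borel--Cantelli and the quotient tends to $0$ for trivial reasons (reading $0/0$ as $0$ when $r_j\equiv 0$), so assume $S_n\to\infty$. Both $n\mapsto N(\mathscr{G},f_n)$ and $n\mapsto S_n$ are nondecreasing, which is exactly the hypothesis driving the interpolation argument in the proof of Theorem~\ref{thm:LLNintro}(iii): for a subsequence $(n_i)$, Chebyshev's inequality and the $L^2$ bound give ${\rm Prob}\bigl(|N(\mathscr{G},f_{n_i})-S_{n_i}|>\delta\,n_i^{\epsilon}\sqrt{S_{n_i}}\bigr)\le \delta^{-2}n_i^{-2\epsilon}$, and since $S_n\le n$ forces $n_i\ge S_{n_i}$ the Borel--Cantelli series converges as soon as $\sum_i n_i^{-2\epsilon}<\infty$; monotonicity then bounds $|N(\mathscr{G},f_n)-S_n|$ on a block $[n_i,n_{i+1}]$ by the two endpoint deviations plus the increment $S_{n_{i+1}}-S_{n_i}$, each divided by $n^{\epsilon}\sqrt{S_n}\ge n_i^{\epsilon}\sqrt{S_{n_i}}$. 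When $S_n$ grows slowly a single geometrically spaced subsequence makes both the series and the interpolation error vanish, just as in Theorem~\ref{thm:LLNintro}(iii). The main obstacle is the regime where $S_n$ grows like a positive power of $n$: there no single subsequence makes $\sum_i n_i^{-2\epsilon}$ converge while also forcing $S_{n_{i+1}}-S_{n_i}=o(n_i^{\epsilon}\sqrt{S_{n_i}})$, so one must iterate the Chebyshev--Borel--Cantelli--interpolate scheme across a dyadic hierarchy of scales, subdividing each block into sub-blocks of controlled $S$-increment---an elementary maximal-inequality argument made legitimate by the monotonicity of the counting function. Carrying this out for every $\epsilon>0$ completes the proof; pairwise independence enters only through the identification of $E(2,M)$ above.
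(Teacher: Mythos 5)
Your categorical setup --- the opposite category of finite subsets, the identification of $\#{\rm Epi}(\mathscr{G},S)$ with the indicator of $S\subseteq\mathscr{G}$, the ordering by singletons, and the second-moment bound $\int_{\mathcal{P}}(N(\mathscr{G},f_n)-S_n)^2\,d\mu\ll S_n$ with $S_n=\sum_{j\le n}r_j$ via Theorem~\ref{thm:bounding_mixed_moments_2} --- matches the paper's. The gap is exactly where you say ``the real work lies,'' and that work is not done. Chebyshev with only a second moment gives ${\rm Prob}\bigl(|N(\mathscr{G},f_n)-S_n|>\delta n^{\epsilon}\sqrt{S_n}\bigr)\ll\delta^{-2}n^{-2\epsilon}$, which is not summable for $\epsilon\le 1/2$, and your own analysis correctly shows that no single subsequence $(n_i)$ simultaneously makes $\sum_i n_i^{-2\epsilon}$ converge and keeps $S_{n_{i+1}}-S_{n_i}=o(n_i^{\epsilon}\sqrt{S_{n_i}})$ once $S_n$ grows like a power of $n$. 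The proposed repair --- ``iterate the Chebyshev--Borel--Cantelli--interpolate scheme across a dyadic hierarchy of scales'' --- is not an argument: monotonicity of $n\mapsto N(\mathscr{G},f_n)$ only sandwiches the deviation on a block between the endpoint deviations plus the increment of $S$, and subdividing into sub-blocks of controlled $S$-increment just reintroduces, as Chebyshev terms at the sub-block endpoints, the same divergent Borel--Cantelli sum you already identified. What would actually close this route is a genuine maximal inequality for pairwise uncorrelated summands, namely the Rademacher--Menshov bound $\mathbb{E}\bigl[\max_{k\le n}|N(\mathscr{G},f_k)-S_k|^2\bigr]\ll(\log n)^2\sum_{j\le n}r_j(1-r_j)$ combined with a Kronecker-lemma bookkeeping to absorb the $(\log n)^2$; nothing of the sort is stated or proved in your proposal, nor is it part of the paper's toolkit.

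The paper avoids the issue entirely by not stopping at the second moment. The constructed measure has $M_A=\prod_{a\in A}r_a$, i.e.\ full mutual independence rather than merely the pairwise independence you treat as the only usable input (this is also why your remark that ``the choice of completion is irrelevant'' does not apply to the paper's argument). Consequently Theorem~\ref{thm:bounding_mixed_moments_2k} applies for every $k$: among tuples of singletons, those outside $E(2k,M)$ are the ones with at most $k$ distinct coordinates, which yields $\int_{\mathcal{P}}\lvert N(\mathscr{G},f_n)-S_n\rvert^{2k}\,d\mu\ll_k S_n^{k}+O(1)$. Feeding this into Theorem~\ref{thm:SLLN} with $E(n)=n^{1/k}\sqrt{S_n}$ and $\gamma(n)=n^{2}$ makes the Borel--Cantelli series converge over all $n$, with no subsequence extraction and no interpolation, and taking $k>1/\epsilon$ finishes the proof. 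You should either adopt this higher-moment route or explicitly state and prove the maximal inequality on which your sketch silently relies.
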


If we let $r_n = \frac{1}{\log(n)}$ be the probability that $n$ is prime (with $r_1=0$ and $r_2=1$ in order to make sense) this is precisely Cram\'er's original random model for the set of primes \cite{cramer1994}. We reproduce Cram\'er's main term with this asymptotic:
\[
\#(\mathscr{G}\cap\{1,...,n\}) = \sum_{j=3}^n \frac{1}{\log(j)} + o\left(n^{\epsilon}\sqrt{\sum_{j=3}^n \frac{1}{\log(j)}}\right).
\]
with probability $1$. One easily checks using difference calculus that the main term is of the same order of magnitude as ${\rm Li}(n)$ in agreement with the prime number theorem. The error we produce, while not as strong as Cram\'er's, is still $o(n^{1/2+\epsilon})$ suggesting the truth of the Riemann hypothesis.

\begin{proof}
Let $C$ be the category gotten by letting $C^{\rm op}$ be the category of finite subsets of $\N$ whose morphisms are inclusions. The pro-objects are all the subsets of $\N$. This category is incredibly nice for numerous reasons:
\begin{itemize}
\item[(a)] $\Hom(A,B) = {\rm Epi}(A,B)$ contains only the inclusion map $A\hookleftarrow B$ if $B\subseteq A$ and is empty otherwise.
\item[(b)] The product of $A$ and $B$ is given by the union $A\cup B$, and the fact that all morphisms are epimorphisms implies this is an epi-product too.
\item[(c)] Any sequence of finite moments $M_A$ on this category is well-behaved in the sense of \cite{sawin-wood2022}, because the well-behavedness sum is always finite. A level is the power set $2^D$ for some finite set $D$, while the elements with epimorphisms from $A$ are precisely the subsets of $A$.
\item[(d)] The M\"obius function on the lattice of isomorphism classes ordered by epimorphisms is given by $\mu(B,A) = (-1)^{|A\setminus B|}$.
\end{itemize}

The function $\#{\rm Epi}(B,A)$ is then the characteristic function of the event $(A\subseteq B)$, which among a level $2^D$ has expected value precisely $\prod_{a\in A} r_a$, so we define this to be $M_A$. Certainly $M_{A\cup B} = M_A M_B$ whenever $A\cap B = \emptyset$. We check that $M_A$ corresponds to a measure on the pro-objects using \cite[Theorem 1.7]{sawin-wood2022}. Indeed,
\begin{align*}
v_{2^D,B} &= \sum_{A\subseteq D} \frac{\hat{\mu}(B,A)}{|\Aut(A)|}M_A\\
&=\sum_{B\subset A\subseteq D} \frac{(-1)^{|A\setminus B|}}{1}\prod_{a\in A} r_a\\
&=\prod_{b\in B} r_b \prod_{d\in D\setminus b} (1-r_d)\\
&\ge 0
\end{align*}
so the measure $\mu$ exists.

We also consider that $\#{\rm Epi}(\mathscr{G},A)$ is the characteristic function of the event $(A\subseteq \mathscr{G})$. Thus, we conclude that
\[
\#{\rm Epi}(\mathscr{G},A)\#{\rm Epi}(\mathscr{G},B) = \#{\rm Epi}(\mathscr{G},A\cup B),
\]
so that $M_{(A_1,A_2,...,A_{2k})} = M_{A_1\cup A_2\cup \cdots \cup A_{2k}}$. In this example, we have a very convenient bound for the finite moments of unions
\[
M_{A\cup B} = \prod_{a\in A\cup B} r_a \ge \prod_{a\in A} r_a \prod_{b\in B} r_b = M_A M_B
\]
which follows from the assumption that $r_n\in [0,1]$. In particular, this implies $M^{(j)} \le M^{(2k)}$ for each $j\le 2k$.

Any ordering $f_n$ supported on a family of pairwise disjoint sets necessarily satisfies that $C^{2k}\setminus E(2k,M)$ intersected with the support of $f_n$ is precisely the collection of tuples $(A_1,...,A_{2k})$ in the support of $f_n$ which has at most $k$ distinct coordinates. We choose $f_n$ to be the characteristic function of singleton sets $\{m\}$ for which $m\le n$. Up to the number of ways to choose matching coordinates, it suffices to consider objects in $C^k$. Thus, we can bound
\begin{align*}
\int_{C^{2k} \setminus E(2k,M)} |f_n|\ dM^{(j)}(dM)^{2k-j} &\le \int_{C^{2k} \setminus E(2k,M)} |f_n|\ dM^{(2k)}\\
&\ll_k \sum_{(A_1,...,A_k)\in C^k} f_n(A_1)\cdots f_n(A_k)M_{A_1\cup\cdots \cup A_k}
\end{align*}
Noting that $f_n$ is supported on singleton sets, this is equivalent to
\begin{align*}
&=\sum_{|A|\le k} \#\{a_1,...,a_k\in A : A = \{a_1,...,a_k\}\} \cdot\prod_{a\in A} f_n(\{a\})M_{\{a\}}\\
&\ll_k \sum_{1\le |A|\le k}\prod_{a\in A} f_n(\{a\})M_{\{a\}}\\
&=\left(1 + \int_C f_n\ dM\right)^k - 1.
\end{align*}
If $\int_C f_n\ dM\ge 1$, we can bound this by a constant times the leading term $(\int_C f_n\ dM)^k$, and otherwise this is $O(1)$. Thus we conclude via Theorem \ref{thm:bounding_mixed_moments_2k} that
\begin{align*}
\int_{\mathcal{P}}\left\lvert N(\mathscr{G},f_n) - \int_C f_n\ dM\right\rvert^{2k} d\mu(\mathscr{G}) &\ll_k \left(\int_C f_n\ dM\right)^k + O(1).
\end{align*}
Taking $E(n) = \left(\int_C f_n\ dM\right)^{1/2}$ and $\gamma(n) = n^2$ in Theorem \ref{thm:SLLN} then implies
\[
\frac{N(\mathscr{G},f_n) - \displaystyle\int_C f_n\ dM}{n^{1/k} \sqrt{\displaystyle\int_C f_n\ dM}} \overset{a.s.}{\longrightarrow} 0.
\]

The counting function is given by
\begin{align*}
N(\mathscr{G},f_n) &= \sum_{m\le n}\#{\rm Epi}(\mathscr{G},\{m\})\\
&= \#(\mathscr{G}\cap \{1,...,n\}),
\end{align*}
while the moments of the ordering are given by
\begin{align*}
\int_C f_n\ dM &=\sum f_n(A) M_A\\
&= \sum_{m\le n} r_m.
\end{align*}
Taking $k$ sufficiently large concludes the proof.
\end{proof}

Keeping in mind the connection with random models for prime numbers, we also prove the following:
\begin{corollary}
Let $\mathscr{G}$ be a random subset of $\N$ where we let the events $(n\in \mathscr{G})$ be pairwise independent with probability $r_n\in [0,1]$. Let $\mathbf{1}_S$ be a characteristic function of some subset $S\subseteq \N$, and define the corresponding ordering $f_n$ supported on singleton sets by $f_n = \mathbf{1}_{S\cap \{1,...,n\}}$. Then
\[
\frac{\displaystyle \#\{m\le n : m\in \mathscr{G} \cap S\} - \sum_{\substack{j\le n\\j\in S}}r_j}{\displaystyle n^{\epsilon}\sqrt{\sum_{\substack{j\le n\\j\in S}} r_j} } \overset{a.s.}{\longrightarrow} 0
\]
as $n\to \infty$.
\end{corollary}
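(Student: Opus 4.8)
The plan is to observe that this corollary is precisely the preceding Theorem with the ordering by singletons $\{m\}$, $m\le n$, replaced by its restriction to $S$, and that passing to this restriction preserves every hypothesis used in that proof. Concretely, I would work in the same category $C$ (with $C^{\mathrm{op}}$ the category of finite subsets of $\N$ under inclusion), with the same probability measure $\mu$ on pro-objects — its existence was checked via \cite[Theorem 1.7]{sawin-wood2022} and does not involve $S$ — and the same moments $M_A = \prod_{a\in A} r_a$. The only change is that the ordering is now $f_n = \mathbf{1}_{S\cap\{1,\dots,n\}}$, which is still the characteristic function of a finite family of pairwise disjoint singleton sets, hence still an $L^1$-ordering supported on singletons.

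With that in place I would rerun the combinatorial estimate from the preceding proof essentially verbatim. Because $f_n$ is supported on pairwise disjoint singletons, $C^{2k}\setminus E(2k,M)$ meets the support of $f_n$ only in tuples $(A_1,\dots,A_{2k})$ having at most $k$ distinct coordinates, so up to the number of ways of choosing which coordinates coincide it suffices to sum over $C^k$; using $M^{(j)} \le M^{(2k)}$ and $M_{A\cup B} \le M_A M_B$ (both immediate from $r_n\in[0,1]$) gives
\[
\int_{C^{2k}\setminus E(2k,M)} |f_n|\ dM^{(j)}(dM)^{2k-j} \ll_k \left(1 + \int_C f_n\ dM\right)^k - 1,
\]
where now $\int_C f_n\ dM = \sum_{j\le n,\, j\in S} r_j$. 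This is $\ll_k (\int_C f_n\ dM)^k$ once $\int_C f_n\ dM \ge 1$ and is $O(1)$ otherwise, so Theorem \ref{thm:bounding_mixed_moments_2k} yields
\[
\int_{\mathcal{P}}\left\lvert N(\mathscr{G},f_n) - \int_C f_n\ dM\right\rvert^{2k}d\mu(\mathscr{G}) \ll_k \left(\int_C f_n\ dM\right)^k + O(1).
\]
Feeding this into Theorem \ref{thm:SLLN} with $2k$ in place of $k$, with $E(n) = (\int_C f_n\ dM)^{1/2}$ and $\gamma(n) = n^2$ (so $\sum_n 1/\gamma(n) < \infty$), gives $(N(\mathscr{G},f_n) - \int_C f_n\ dM)/(n^{1/k}\sqrt{\int_C f_n\ dM}) \overset{a.s.}{\longrightarrow} 0$. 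Finally I would identify $N(\mathscr{G},f_n) = \sum_{m\le n,\, m\in S}\#{\rm Epi}(\mathscr{G},\{m\}) = \#\{m\le n : m\in\mathscr{G}\cap S\}$ and $\int_C f_n\ dM = \sum_{j\le n,\, j\in S} r_j$, and take $k$ large enough that $1/k < \epsilon$.

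I do not expect a genuine obstacle here: the whole argument is a specialization of the preceding Theorem, and the only point to verify is that no step secretly used that $f_n$ was supported on \emph{all} singletons $\{m\}$ with $m\le n$ rather than only those with $m\in S$ — it did not, since every inequality used only $r_n\in[0,1]$ and disjointness of the supporting singletons, both of which survive the restriction. The one minor caveat worth a remark is the degenerate case in which $\sum_{j\le n,\, j\in S} r_j$ does not tend to infinity (for instance $S$ finite, or $r_j = 0$ on $S$): there the stated ratio need not even make sense, and one should either assume this sum is eventually positive or invoke the $E(n) = \max\{(\int_C f_n\ dM)^{1/2},\delta\}$ variant of Theorem \ref{thm:SLLN}, exactly as in the remark following that theorem.
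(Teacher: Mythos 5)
Your proposal is correct and matches the paper's intent exactly: the paper gives no separate proof for this corollary because the proof of the preceding theorem was already written for an arbitrary ordering supported on pairwise disjoint singletons, and restricting the support to $S\cap\{1,\dots,n\}$ changes nothing in the argument except the value of $\int_C f_n\, dM$. Your caveat about the degenerate case where $\sum_{j\le n,\ j\in S} r_j$ stays bounded is a reasonable observation, but otherwise this is the same route as the paper.
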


If we let $r_n$ be the probabilities of Cram\'er's model (or of the modifications improving the model), Heuristic \ref{heur:vast} makes predictions for sets of prime numbers belonging to any subset $S\subseteq \N$ obeying the incredibly mild density condition
\[
\sum_{\substack{j\le n\\j\in S}}r_j \gg n^{\delta}
\]
for some $\delta > 0$. This prediction is extremely broad, where the currently ``known issues" amount to divisibility by small primes which are accounted for in corrections to Cram\'er's random model (see \cite{granville1995} for a nice summary). This example is not ``novel'', such measure $1$ statements exist throughout the literature on random models for the primes generalizing Cram\'er's work as a means to justify a number of the most famous conjectures on the distribution of primes. This includes the likes of the Hardy-Littlewood conjecture (where $f$ is the characteristic function on admissible constellations starting from $j$) and primes of the form $x^2+1$ (where $f$ is the characteristic function of integers of the form $x^2+1$).

In addition to being a short example with nice properties, this is meant to demonstrate two things. Firstly this example demonstrates that important existing random models (and corresponding results for those models) for the distribution of prime numbers arise as special cases of Heuristic \ref{heur:vast} (respectively Theorem \ref{thm:LLNintro}) for the category of subsets of $\N$. Secondly, this example demonstrates the power of working at this level of generality. We have given a concrete description for when such probability $1$ results will exist, which allow us to tackle numerous orderings at a time.

\subsection{Random groups}

Any sequence $M_G = O(|G|^n)$ on the category of finite groups is well-behaved in the sense of \cite{sawin-wood2022}, so their results can be used to determine when such a moment sequence gives rise to a probability distribution. We will focus on the example $M_G=1$ discussed in \cite{sawin-wood2022}. We can use Theorem \ref{thm:LLNintro} to make predictions for (asymptotically) how many quotients of a ``random" profinite group have a given behavior. While questions of this nature are easy enough to formulate, the moments of the most natural orderings $f_n$ can be much harder to compute in practice. Utilizing the results of the preceeding subsection, we give the following example:

\begin{theorem}
Let $M_G=1$ on the category of finite \emph{abelian} groups with associated probability measure $\mu$ on the profinite abelian groups. Then the average number of maximal subgroups in a random pro-abelian group of index bounded above by $n$ tends to $\log\log n$ in almost surely. More specifically,
\[
\frac{\#\{N\le \mathscr{G} \text{ maximal}: [\mathscr{G}:N]\le n\}}{\log\log n} \overset{a.s.}{\longrightarrow} 1
\]
as $n\to \infty$.
\end{theorem}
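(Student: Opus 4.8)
The plan is to realise the maximal-subgroup count as a counting function $N(\mathscr{G},f_n)$ and apply Theorem~\ref{thm:LLNintro}(iii), verifying the moment hypothesis (\ref{eq:main_bound}) via Theorem~\ref{thm:bounding_mixed_moments_2} with $k=2$.

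\emph{Setting up the ordering.} Since $\mathscr{G}$ is pro-abelian, every open maximal subgroup $N\le\mathscr{G}$ has prime index $p$, and the index-$p$ maximal subgroups correspond $(p-1)=|\Aut(\Z/p\Z)|$-to-one to elements of ${\rm Epi}(\mathscr{G},\Z/p\Z)$. So I would take $f_n:C/\cong\to\R$ with $f_n(G)=\tfrac1{p-1}$ if $G\cong\Z/p\Z$ for a prime $p\le n$ and $f_n(G)=0$ otherwise. Then $f_n$ has finite support (hence is an $L^1$-ordering), and
\[
N(\mathscr{G},f_n)=\sum_{p\le n}\frac{1}{p-1}\#{\rm Epi}(\mathscr{G},\Z/p\Z)=\#\{N\le\mathscr{G}\text{ maximal}:[\mathscr{G}:N]\le n\}.
\]
By Lemma~\ref{lem:nicecase}, $\int_C f_n\,dM=\sum_{p\le n}\tfrac1{p-1}$ since $M_G=1$; by Mertens' theorem $\sum_{p\le n}\tfrac1p=\log\log n+O(1)$, and $\sum_{p\le n}\bigl(\tfrac1{p-1}-\tfrac1p\bigr)=\sum_{p\le n}\tfrac1{p(p-1)}$ converges, so $\int_C f_n\,dM\sim\log\log n$; in particular $\liminf_n|\int_C f_n\,dM|>0$. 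Moreover $f_n\ge0$ and $n\mapsto N(\mathscr{G},f_n)$ is (deterministically) nondecreasing, so the first two bullets of Theorem~\ref{thm:LLNintro}(iii) hold.

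\emph{The second-moment bound.} Next I would apply Theorem~\ref{thm:bounding_mixed_moments_2} and analyse $C^2\setminus E(2,M)$ on the support of $f_n$, i.e.\ pairs $(\Z/p\Z,\Z/q\Z)$ with $p,q\le n$. For $p\ne q$ the epi-product $\Z/p\Z\times_{\rm Epi}\Z/q\Z$ exists and equals $\Z/pq\Z$: any $H$ with epimorphisms to $\Z/p\Z$ and to $\Z/q\Z$ maps onto $\Z/p\Z\times\Z/q\Z=\Z/pq\Z$ by coprimality, and the factoring map is forced and unique; since $M_{\Z/pq\Z}=1=M_{\Z/p\Z}M_{\Z/q\Z}$, the criterion of Lemma~\ref{lem:uncorrelated} puts $(\Z/p\Z,\Z/q\Z)\in E(2,M)$. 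For $p=q$ no epi-product $P$ exists: applying the universal property to $H=\Z/p\Z$ with both maps the identity forces $P\cong\Z/p\Z$, but then $H=(\Z/p\Z)^2$ with the two coordinate projections admits no compatible epimorphism to $P$, since a single map $(\Z/p\Z)^2\to\Z/p\Z$ cannot equal both projections. Hence $C^2\setminus E(2,M)$ meets the support of $f_n$ only along the diagonal, and Theorem~\ref{thm:bounding_mixed_moments_2} gives
\[
\int_{\mathscr{G}}\Bigl|N(\mathscr{G},f_n)-\int_C f_n\,dM\Bigr|^2 d\mu(\mathscr{G})\ \ll\ \max_{j\in\{1,2\}}\ \sum_{p\le n}\frac{1}{(p-1)^2}\,M^{(j)}_{(\Z/p\Z,\Z/p\Z)}.
\]
The $j=1$ term is $\sum_{p\le n}(p-1)^{-2}=O(1)$. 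For $j=2$, Lemma~\ref{lem:MG1Gk} applies (finite abelian groups have unique epi-mono factorisation and $(\Z/p\Z)^2$ has finitely many subgroups), giving $M^{(2)}_{(\Z/p\Z,\Z/p\Z)}=\#\{H\le(\Z/p\Z)^2:\text{both coordinate projections are surjective}\}$; the one-dimensional such $H$ are the $p-1$ lines other than the two coordinate axes, and together with $(\Z/p\Z)^2$ itself this is $p$. So the $j=2$ term is $\sum_{p\le n}\tfrac{p}{(p-1)^2}\asymp\sum_{p\le n}\tfrac1p\sim\log\log n$, whence $\int_{\mathscr{G}}|N(\mathscr{G},f_n)-\int_C f_n\,dM|^2\,d\mu\ll\log\log n$.

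\emph{Conclusion and main obstacle.} Because $\int_C f_n\,dM\sim\log\log n$, the last bound is $O\bigl((\int_C f_n\,dM)^2/\gamma(n)\bigr)$ with $\gamma(n)=\int_C f_n\,dM$, which equals $\psi(\int_C f_n\,dM)$ for $\psi(x)=\max(x,1)$ — nondecreasing, and with $\sum_n\tfrac1{n\psi(n)}=\sum_n n^{-2}<\infty$. Thus all hypotheses of Theorem~\ref{thm:LLNintro}(iii) hold with $k=2$, yielding $N(\mathscr{G},f_n)/\int_C f_n\,dM\to1$ almost surely, and hence $N(\mathscr{G},f_n)/\log\log n\to1$ almost surely. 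I expect the crux to be the second-moment step: one must at once kill every off-diagonal correlation through the epi-product identity $\Z/p\Z\times_{\rm Epi}\Z/q\Z=\Z/pq\Z$ and compute the diagonal mixed moment $M^{(2)}_{(\Z/p\Z,\Z/p\Z)}=p$ exactly, so that its linear growth in $p$ is precisely absorbed by the $(p-1)^{-2}$ weight, leaving exactly a harmonic-over-primes sum of size $\log\log n$.
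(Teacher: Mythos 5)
Your proposal is correct and follows essentially the same route as the paper: the same ordering $f_n(C_p)=\tfrac1{p-1}$ for $p\le n$, the same coprimality argument reducing $C^2\setminus E(2,M)$ to the diagonal, the same use of Lemma \ref{lem:MG1Gk} to get $M^{(2)}_{(C_p,C_p)}=p$, and the same application of Theorem \ref{thm:bounding_mixed_moments_2} followed by Theorem \ref{thm:LLNintro}(iii). The only differences are cosmetic (you take $\psi(x)=\max(x,1)$ where the paper takes $\psi(t)=t^{1/2}$, and you spell out the existence of $\Z/p\Z\times_{\rm Epi}\Z/q\Z$ and the nonexistence of the diagonal epi-product, which the paper leaves implicit).
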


\begin{proof}
We take the ordering
\[
f_n(G) = \begin{cases}
\frac{1}{p-1} & G \cong C_p,\ p\le n\\
0&\text{else}.
\end{cases}
\]
All maximal subgroups of an abelian group are normal with quotient isomorphic to $C_p$ for some prime $p$, so we can compute
\begin{align*}
\#\{N\le \mathscr{G} \text{ maximal}: [\mathscr{G}:N]\le n\} &= \sum_{p\le n} \frac{\#{\rm Epi}(\mathscr{G},C_p)}{p-1}\\
&=\sum_{G\in C/\cong} f_n(G)\#{\rm Epi}(\mathscr{G},C_p)\\
&=N(\mathscr{G},f_n).
\end{align*}
We note that $f_n$ is nonegative and that $n\mapsto N(\mathscr{G},f_n)$ is certainly increasing in $n$. Thus, we can apply Theorem \ref{thm:LLNintro}(iii). We first compute
\begin{align*}
\int_C f_n\ dM &= \sum_{p\le n} \frac{1}{p-1} \sim \log\log n.
\end{align*}
We next apply Theorem \ref{thm:bounding_mixed_moments_2} to bound (\ref{eq:main_bound}). The ordering $f_n$ is supported on the finite simple abelian groups $C_p$, which are of pairwise coprime order. Thus, the intersection of $C^2\setminus E(2,M)$ with the support of $f_n$ is precisely the diagonal objects $(C_p,C_p)$ with $p\le n$. We compute
\begin{align*}
\int_{C^2\setminus {\rm Epi}_M^2 C^2} f_n\ dM^{(2)} &= \sum_{p\le n} \frac{1}{(p-1)^2} M_{(C_p,C_p)}\\
&= \sum_{p\le n} \frac{1}{(p-1)^2} \left(M_{C_p\times C_p} - (p-1) M_{C_p}\right)\\
&= \sum_{p\le n} \frac{p}{(p-1)^2}\\
&\sim \log\log n,
\end{align*}
where the second equality follows from Lemma \ref{lem:MG1Gk}, noting that the subgroups of $C_p\times C_p$ that surject onto each coordinate are the whole group, and $p-1$ subgroups isomorphic to $C_p$. The $j=1$ integral actually converges as $n\to \infty$ by a similar computation, so this is the maximum. All together Theorem \ref{thm:bounding_mixed_moments_2} implies
\begin{align*}
\int_{\mathcal{P}}\left\lvert N(\mathscr{G},f_n) - \int_C f_n\ dM\right\rvert^2 d\mu(\mathscr{G}) &= O(\log\log n)\\
&= O\left(\frac{(\log\log n)^2}{(\log\log n)^{1/2}}\right)\\
&= O\left(\frac{\left(\int_C f_n dM\right)^2}{\psi\left(\int_C f_n dM\right)}\right),
\end{align*}
where $\psi(t) = t^{1/2}$ is non decreasing with $\lim_{t\to \infty} \psi(t) = \infty$. We confirm that $\sum \frac{1}{n\psi(n)} = \sum n^{-3/2} < \infty$ converges, so the result follows from Theorem \ref{thm:LLNintro}(iii).
\end{proof}

More general questions on random groups can be asked, but much about the asymptotic behavior of large families of groups is difficult to compute. This makes asymptotic growth rates and error terms also difficult to compute in practice. For instance, it would be nice to study the ordering $f_n(G) = \frac{1}{|\Aut(G)|}$ if $|G|\le n$ and $0$ otherwise. The corresponding counting function is then
\[
N(\mathscr{G},f_n) = \#\{N\normal \mathscr{G} \mid [\mathscr{G}:N] \le n\}.
\]
This is a naturally interesting function to ask about. However, the corresponding moment
\[
\int_C f_n\ dM = \sum_{|G|\le n} \frac{1}{|\Aut(G)|}
\]
is not easy to determine. For instance, there are reasons to suspect that 100\% of groups ordered by cardinality are $2$-groups, but the author is not aware of a proof of this statement. Moreover, the bound in Theorem \ref{thm:bounding_mixed_moments_2} requires some more serious group theory to translate the results of Lemma \ref{lem:MG1Gk} into bounds on the error.

A number of interesting statistical questions that are simple to state arise in this way. The language of Theorem \ref{thm:LLNintro} is useful to create a framework for determining asymptotic growth rates, even if the moments of the ordering requires additional study to prove any results.

\bibliographystyle{alpha}
\bibliography{Counting_functions.bbl}

\begin{thebibliography}{LWZB19}

\bibitem[Alb23]{alberts2023}
Brandon Alberts.
\newblock A random group with local data, 2023.
\newblock Forthcoming.

\bibitem[BBH17]{boston-bush-hajir2017}
Nigel Boston, Michael~R. Bush, and Farshid Hajir.
\newblock Heuristics for $p$-class towers of imaginary quadratic fields.
\newblock {\em Mathematische Annalen}, 368(1-2):633--669, June 2017.

\bibitem[BM90]{batyrev-manin1990}
V.~V. Batyrev and Yu.~I. Manin.
\newblock Sur le nombre des points rationnels de hauteur born{\'e} des
  vari{\'e}t{\'e}s alg{\'e}briques.
\newblock {\em Mathematische Annalen}, 286(1-3):27–43, 1990.

\bibitem[CL84]{cohen-lenstra1984}
Henri Cohen and Hendrik.~W. Lenstra.
\newblock Heuristics on class groups of number fields.
\newblock {\em Lecture Notes in Mathematics Number Theory Noordwijkerhout
  1983}, pages 33--62, 1984.

\bibitem[Cra94]{cramer1994}
Harald Cram{\'e}r.
\newblock Some theorems concerning prime numbers.
\newblock {\em Springer Collected Works in Mathematics}, page 138–170, 1994.

\bibitem[Ele18]{mathstackLLN2}
(https://math.stackexchange.com/users/7145/elements) Elements.
\newblock Weak law of large numbers for dependent random variables with bounded
  covariance.
\newblock Mathematics Stack Exchange, 2018.
\newblock URL:https://math.stackexchange.com/q/245327 (version: 2018-04-10).

\bibitem[Ete81]{etemadi1981}
N.~Etemadi.
\newblock An elementary proof of the strong law of large numbers.
\newblock {\em Z. Wahrscheinlichkeitstheorie und Verwandte Gebiete},
  55(1):119–122, 1981.

\bibitem[Ete83a]{etemadi1983a}
N.~Etemadi.
\newblock On the laws of large numbers for nonnegative random variables.
\newblock {\em Journal of Multivariate Analysis}, 13(1):187–193, 1983.

\bibitem[Ete83b]{etemadi1983b}
N.~Etemadi.
\newblock Stability of sums of weighted nonnegative random variables.
\newblock {\em Journal of Multivariate Analysis}, 13(2):361–365, 1983.

\bibitem[Fis11]{fischer2011}
Hans Fischer.
\newblock {\em A history of the central limit theorem: From classical to modern
  probability theory}.
\newblock Springer New York, 2011.

\bibitem[FMT89]{franke-manin-tschinkel1989}
Jens Franke, Yuri~I. Manin, and Yuri Tschinkel.
\newblock Rational points of bounded height on fano varieties.
\newblock {\em Inventiones Mathematicae}, 95(2):421–435, 1989.

\bibitem[FW89]{friedman-washington1989}
Eduardo Friedman and Lawrence~C. Washington.
\newblock On the distribution of divisor class groups of curves over a finite
  field.
\newblock {\em Th{\'e}orie des nombres / Number Theory}, page 227–239, 1989.

\bibitem[Gra95]{granville1995}
Andrew Granville.
\newblock Harald cram{\'e}r and the distribution of prime numbers.
\newblock {\em Scandinavian Actuarial Journal}, 1995(1):12–28, 1995.

\bibitem[KP10]{korchevsky-petrov2010}
V.~M. Korchevsky and V.~V. Petrov.
\newblock On the strong law of large numbers for sequences of dependent random
  variables.
\newblock {\em Vestnik St. Petersburg University: Mathematics},
  43(3):143–147, 2010.

\bibitem[LWZB19]{liu-wood-zureick-brown2019}
Yuan Liu, Melanie~Matchett Wood, and David Zureick-Brown.
\newblock A predicted distribution for {G}alois groups of maximal unramified
  extensions, July 2019.
\newblock Preprint available at \url{https://arxiv.org/abs/1907.05002}.

\bibitem[Mal02]{malle2002}
Gunter Malle.
\newblock On the distribution of {Galois} groups.
\newblock {\em Journal of Number Theory}, 92(2):315--329, 2002.

\bibitem[Mal04]{malle2004}
Gunter Malle.
\newblock On the distribution of {Galois} groups, {II}.
\newblock {\em Experimental Mathematics}, 13(2):129--135, 2004.

\bibitem[Mic17]{mathstackLLN1}
(https://math.stackexchange.com/users/155065/michael) Michael.
\newblock Pairwise uncorrelated random variables in strong law of large numbers
  (slln).
\newblock Mathematics Stack Exchange, 2017.
\newblock URL:https://math.stackexchange.com/q/2545239 (version: 2017-11-30).

\bibitem[MU17]{mitzenmacher-upfal2017}
Michael Mitzenmacher and Eli Upfal.
\newblock {\em Probability and computing: Randomization and probabilistic
  techniques in algorithms and data analysis}.
\newblock Cambridge University Press, 2017.

\bibitem[Pet09a]{petrov2009a}
V.~V. Petrov.
\newblock On stability of sums of nonnegative random variables.
\newblock {\em Journal of Mathematical Sciences}, 159(3):324–326, 2009.

\bibitem[Pet09b]{petrov2009b}
V.~V. Petrov.
\newblock On the strong law of large numbers for nonnegative random variables.
\newblock {\em Theory of Probability \&; Its Applications}, 53(2):346–349,
  2009.

\bibitem[Sen13]{seneta2013}
Eugene Seneta.
\newblock A tricentenary history of the law of large numbers.
\newblock {\em Bernoulli}, 19(4), 2013.

\bibitem[SS93]{sen-singer1993}
Pranab~Kumar Sen and Julio da~Motta Singer.
\newblock {\em Large sample methods in statistics: An introduction with
  applications}.
\newblock Chapman {\&} Hall/CRC, 1 edition, 1993.

\bibitem[SW22]{sawin-wood2022}
Will Sawin and Melanie~Matchett Wood.
\newblock The moment problem for random objects in a category, Oct 2022.

\bibitem[wik22]{wikipediaLLN}
Law of large numbers, Oct 2022.

\end{thebibliography}

\end{document}